\newtheorem{theorem}{Theorem}[section]
\newtheorem{lemma}[theorem]{Lemma}
\newtheorem{corollary}[theorem]{Corollary}
\newtheorem{remark}[theorem]{Remark}
\theoremstyle{definition}
\newtheorem{example}[theorem]{Example}
\def\Id{I}
\def\Im{\mathop{\mathrm{Im}}\nolimits}
\def\diag{\mathop{\mathrm{diag}}\nolimits}
\def\rk{\mathop{\mathrm{rk}}\nolimits}
\def\tr{{\rm tr}\,}
\def\FF{{\mathbb F}}
\def\ZZ{{\mathbb Z}}
\def\Lin{\mathop{\rm Lin}}
\def\C{\mathop{\mathcal{C}}}
\def\path{\mathop{\raise.7mm\hbox{\vrule width 3mm height .2mm}}}
\begin{document}

\title{On maximal distances in a commuting graph}

\author[Dolinar]{Gregor Dolinar}
\address[Gregor Dolinar]{Faculty of Electrical Engineering, University of Ljubljana,
Tr\v{z}a\v{s}ka cesta 25, SI-1000 Ljubljana, Slovenia.}
 \email[Gregor Dolinar]{gregor.dolinar@fe.uni-lj.si}
  \author[Kuzma]{Bojan Kuzma}
\address[Bojan Kuzma]{${}^1$University of Primorska,
Glagolja\v{s}ka 8, SI-6000 Koper, Slovenia, \and ${}^2$IMFM, Jadranska 19, SI-1000
Ljubljana, Slovenia.}
 \email[Bojan Kuzma]{bojan.kuzma@famnit.upr.si}
 \author[Oblak]{Polona Oblak}
\address[Polona Oblak]{Faculty of Computer and Information Science, University of Ljubljana,
Tr\v{z}a\v{s}ka cesta 25, SI-1000 Ljubljana, Slovenia.}
 \email[Polona Oblak]{polona.oblak@fri.uni-lj.si}

\thanks{The work is partially supported by  a grant from the
Ministry of Higher Education, Science and Technology, Slovenia.}

\begin{abstract}
We study maximal distances in the commuting graphs of  matrix algebras defined over
algebraically closed fields. In particular, we show that the maximal distance can be
attained only between two  nonderogatory matrices. We also describe rank-one and semisimple matrices using the
distances in the commuting graph.
\end{abstract}
\keywords{Algebraically closed field, Matrix algebra, Centralizer, Commuting graph,
Distance, Path, Minimal matrix, Maximal Matrix.}

 \subjclass[2000]{15A27, 05C50, 05C12}

\maketitle

\section{Introduction and preliminaries}

One of the options how to study properties in certain non-commutative algebraic domains
is a commutator. For example, in  algebras the additive commutator, i.e., Lie product
$[A,B]_a=AB-BA$ is usually used and with its help some beautiful results were obtained.
Let us only mention the famous Kleinecke-Shirokov Theorem~\cite{Kleinecke,Shirokov}. In
groups   the multiplicative commutator $[A,B]_m=A^{-1}B^{-1}AB$ is used, and it is a
central tool in studying solvability of groups and hence in Galois theory of solvability
of equations by radicals~\cite{Grove}.

Additional information about non-commuting elements is obtained  by studying the
properties of a commuting graph. For example, if the commuting graphs over two finite
semisimple rings are isomorphic, then their noncommutative parts are also
isomorphic~\cite{AGHM04}. Let us remark that commuting graph can also be used  in
algebraic domains where commutator is not available, e.g., in semigroups or in semirings.

Up until now, one of the prime concerns when studying commuting graphs was calculating
its diameter~\cite{Abdollahi,AMRR06,AraKinKon10,DolObl10,GiuPope10,Mohammadian10,
Segev01}. It turned out that we obtain essentially different results if the
matrix algebra $M_n(\FF)$ is defined over an algebraically closed field $\FF$ than if it is defined over
non-closed one. While in the former case the diameter is always equal to four, provided $n\ge 3$,
in the later case the graph may be disconnected, and if it is connected
the diameter is known to be at most six. The hypothesis is that if the commuting graph is connected, its diameter
is at most five~\cite[Conjecture~18]{AMRR06}. Note that for $n=2 $ the commuting graph over any field is disconnected \cite[Remark 8]{AR06}.

In the present paper we are interested in commuting graphs of matrix algebras $M_n(\FF)$ over
algebraically closed fields $\FF$ with $n \ge 3$.
In particular we  study the maximal distances  between its
vertices. It was already proved in~\cite[Proof of Theorem~3]{AMRR06} that an
elementary Jordan matrix is  always at the maximal distance (i.e., four) from its
transpose. In the present paper we  show that the maximal distance cannot be achieved when
one of the matrices is derogatory. However, if both $A$ and $B$ are non-derogatory we
construct an invertible matrix $S$ so that $A$ and $S^{-1}BS$ are at the distance four.
We also show that there exist an infinite collection
of matrices, pairwise at the maximal distance. Next, we describe rank-one
matrices as the ones which are not at the maximal distance from any derogatory matrix. A
similar result classifies semisimple (i.e., diagonalizable) matrices. Our paper concludes
with a specific example of matrix algebra over algebraically non-closed field, such that the diameter of its commuting
graph is greater than four.

Let us briefly recall some standard definitions and notations. Unless explicitly stated
otherwise, $\FF$ is an algebraically closed field of an arbitrary characteristics.
Further, $M_{m, n}(\FF)$ is the space of $m\times n$ matrices over $\FF$ with a standard
basis $E_{ij}$, and $M_n(\FF)=M_{n, n}(\FF)$ is the matrix algebra with identity~$\Id$.
Let $e_1,\dots,e_n$ be the standard basis of column vectors in $\FF^n$ (i.e., of $n\times
1$ matrices). Given an integer $k\ge 2$  denote by $J_k(\mu)=\mu\Id_k+\sum_{i=1}^{k-1}
E_{i(i+1)}\in M_k(\FF)$ the upper-triangular elementary Jordan cell with $\mu$ on its
main diagonal, and  let $J_1(\mu)= \mu \in \FF$. We  write shortly $J_k = J_k(0)$. Matrix
$B$ is a conjugated matrix of $A$ if $B=S^{-1}AS$ for some invertible matrix $S$. As
usual, $A^{\tr}$ is a transpose of $A\in M_n(\FF)$ and $\rk A$ its rank.

For a matrix algebra $M_n(\FF)$ over a field $\FF$ its commuting graph $\Gamma(M_n(\FF))$
is a simple graph (i.e., undirected and loopless), with the vertex set consisting of all
non-scalar matrices. Two vertices $X, Y$ form an edge  $X \path  Y$ if the
corresponding matrices are different and commute, i.e., if $X \ne Y$ and  $XY=YX$. The
sequence of successive connected vertices $X_0\path  X_1$, $ X_1 \path  X_2$, ...,
$X_{k-1} \path X_{k}$ is  a path of length $k$ and is denoted by $X_0 \path  X_1 \path
\ldots \path X_k$. The distance $d(A,B)$ between  vertices $A$ and $B$ is the length of
the shortest path between them. The diameter of the graph is the maximal distance between
any two vertices of the graph.

Given a subset $\Omega \subseteq M_n(\FF)$, let
$$\C(\Omega)=\{X\in M_n(\FF);\;\;AX=XA \mbox{ for every } A \in \Omega \}$$
be its centralizer. If $\Omega = \{ A \}$ then we write shortly $\C(A) = \C(\{ A \})$. In
graph terminology, the set of all non-scalar matrices from the centralizer of $A$ is equal to the set of all vertices $X$ such that
$d(A,X) \le 1$.  Note that $\FF\Id\in \C(A)$ for any matrix~$A$ and that, by  a double centralizer theorem,
$\C(\C(A))=\FF[A]$ (see~\cite[Theorem~2, pp.~106]{Wed} or~\cite{lagerstrom}).
We remark that in
different articles  a centralizer is also called  a commutant and  is denoted by
$A'=\C(A)$.

A centralizer induces two natural relations on $M_n(\FF)$. One is
the equivalence relation, defined by $A\sim B$ if $\C(A)=\C(B)$.
We  call any such two matrices equivalent. The other relation is a
preorder given by $A\prec B$ if $\C(A)\subseteq \C(B)$. It was
already observed that  minimal  and  maximal matrices in this
poset are of special importance, see for
example~\cite{Dolinar_Semrl,Semrl,Dol-Kuz}. Recall that a matrix
$A$ is minimal if $\C(X) \subseteq \C(A)$ implies $\C(X)=\C(A)$.
It was shown in~\cite[Lemma 3.2]{Semrl} that the matrix $A$ is
minimal if and only if it is nonderogatory, which means that each
of its eigenvalue has geometric multiplicity one, which is further
equivalent to the fact that its Jordan canonical form is  equal to
$J=J_{n_1}(\lambda_1)\oplus\dots\oplus J_{n_k}(\lambda_k)$, with
$\lambda_i\ne\lambda_j$ for $i\ne j$. In this case,
$$\C(J)=\FF[J_{n_1}(\lambda_1)]\oplus\dots\oplus\FF[J_{n_k}(\lambda_k)]=\FF[J],$$
where $\FF[X]$ is an $\FF$-algebra generated by $X$, see~\cite[Theorem~1, pp. 105]{Wed}
or~\cite[Theorem~3.2.4.2]{Horn-Johnson}.

Recall also that a non-scalar matrix $A$ is maximal if $\C(A) \subseteq
\C(X)$ implies $\C(A)=\C(X)$ or $X$ is a scalar matrix. It is known (see~\cite[Lemma~4]{Dolinar_Semrl} and also~\cite[Lemma~3.1]{Semrl}) that
a matrix is maximal if and only if it is equal to $\alpha\Id+\beta P$ or $\alpha\Id+\beta N$,
where $P^2=P$ is a non-scalar idempotent, $N\ne0$ is square-zero (i.e., $N^2=0$), and a scalar $\beta$ is
nonzero. It should be noted that the proof of this fact was done only for the field of
complex numbers, but can be repeated almost unchanged in an arbitrary algebraically closed field.

\section{Results}

Throughout this section, with an exception of the last example, $\FF$ is an algebraically closed field and $n \ge 3$.
We start with three technical lemmas which will be needed in the sequel. First we observe
that every matrix commutes with a rank-one matrix.

\begin{lemma}\label{lem:rk-1'=all}%
For every matrix $A \in M_n(\FF)$ there exists a rank-one matrix $R\in M_n(\FF)$ with $d(A,R) \le 1$.
\end{lemma}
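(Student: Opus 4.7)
The plan is to construct $R$ explicitly as a rank-one outer product of a right eigenvector and a left eigenvector of $A$ sharing a common eigenvalue. Since $\FF$ is algebraically closed, the characteristic polynomial of $A$ splits, so $A$ has some eigenvalue $\lambda \in \FF$. In particular there exists a nonzero column vector $v \in \FF^n$ with $Av = \lambda v$. Because $A$ and $A^{\tr}$ have identical characteristic polynomials, $\lambda$ is also an eigenvalue of $A^{\tr}$, yielding a nonzero column vector $w \in \FF^n$ with $A^{\tr} w = \lambda w$, equivalently $w^{\tr} A = \lambda w^{\tr}$.

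Now I set $R := v w^{\tr} \in M_n(\FF)$. As a nonzero outer product of two nonzero vectors, $R$ has rank exactly one, and since $n \ge 3$ any rank-one matrix is automatically non-scalar and is therefore a legitimate vertex of $\Gamma(M_n(\FF))$. The verification that $R$ commutes with $A$ is immediate:
\[
AR = (Av)w^{\tr} = \lambda v w^{\tr} = \lambda R,\qquad RA = v(w^{\tr}A) = v(\lambda w^{\tr}) = \lambda R.
\]
Hence $AR = RA$. If $R \ne A$, they form an edge, giving $d(A,R) = 1$; if $R = A$, then $d(A,R) = 0$. In either case $d(A,R) \le 1$, as required.

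There is essentially no obstacle here: the only ingredients are algebraic closure of $\FF$ (to get an eigenvalue), the coincidence of the spectrum of $A$ and $A^{\tr}$ (so that a left eigenvector for the same $\lambda$ exists), and the automatic non-scalarity of rank-one matrices when $n \ge 3$. The construction $R = v w^{\tr}$ with $v$ a right and $w^{\tr}$ a left $\lambda$-eigenvector is the natural choice, and all verifications reduce to one line of matrix arithmetic.
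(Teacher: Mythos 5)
Your proof is correct and takes essentially the same approach as the paper: both construct $R$ as the outer product of a right eigenvector of $A$ and a right eigenvector of $A^{\tr}$ for a common eigenvalue $\lambda$ (the paper just shifts to $A-\lambda\Id$ so that these are kernel vectors and $AR=RA=0$). Your version also explicitly notes non-scalarity of $R$ and the edge case $R=A$, which the paper leaves implicit.
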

\begin{proof}
Given any $A \in M_n(\FF)$, it suffices to show that~$A$ commutes with
at least one matrix of rank one. Since $\FF=\overline{\FF}$, the
matrix~$A$ has at least one eigenvalue $\lambda$.
So, we may assume without
loss of generality that $A$ is singular, otherwise we would
consider $A - \lambda \Id$. Now, let $ x $ and $ y
$ be nonzero vectors in the kernels of $A$ and $A^{\tr}$,
respectively. Then, $R= x  y ^{\tr}$ is a rank-one matrix with
$AR=(A x ) y ^{\tr}=0= x (A^{\tr} y )^{\tr}=RA$.
\end{proof}

Using Lemma~\ref{lem:rk-1'=all} we can give an alternative proof of the already known
fact about the diameter of a commuting graph~\cite[Corollary~7]{AMRR06}.
\begin{corollary}\label{cor:maximal-distance-in-graph=4}
The distance between any two matrices in the commuting graph is at most four.
\end{corollary}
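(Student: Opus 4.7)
The plan is to leverage Lemma~\ref{lem:rk-1'=all} twice, sandwiching arbitrary non-scalar $A, B \in M_n(\FF)$ between rank-one neighbours, and then to establish the intermediate fact that any two rank-one matrices lie at distance at most two in $\Gamma(M_n(\FF))$.

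First I would apply Lemma~\ref{lem:rk-1'=all} to $A$ and to $B$ to obtain rank-one matrices $R_A, R_B$ with $d(A, R_A) \le 1$ and $d(B, R_B) \le 1$. Granted $d(R_A, R_B) \le 2$, concatenation yields the bound $d(A,B) \le 1 + 2 + 1 = 4$; any coincidence of consecutive vertices along the way (e.g.\ $A = R_A$ when $A$ already has rank one, or $R_A = R_B$) only shortens the walk.

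The heart of the proof is the bound $d(R_A, R_B) \le 2$. Write $R_A = x_1 y_1^{\tr}$ and $R_B = x_2 y_2^{\tr}$, with $x_i \in \FF^n$ and $y_i^{\tr} \in (\FF^n)^{*}$ nonzero. Since $n \ge 3$, the hyperplanes $\ker y_1^{\tr}$ and $\ker y_2^{\tr}$ meet in a subspace of dimension at least $n-2 \ge 1$, so I fix a nonzero $v \in \ker y_1^{\tr} \cap \ker y_2^{\tr}$. Dually, the common annihilator of $\{x_1, x_2\}$ inside $(\FF^n)^{*}$ has dimension at least $n-2 \ge 1$, so I fix a nonzero functional $u^{\tr}$ with $u^{\tr} x_1 = u^{\tr} x_2 = 0$. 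The matrix $M := v u^{\tr}$ is non-scalar (being of rank one), and a direct computation gives
$$R_i M = x_i (y_i^{\tr} v) u^{\tr} = 0 = v (u^{\tr} x_i) y_i^{\tr} = M R_i \qquad (i = 1, 2),$$
so $M \in \C(R_A) \cap \C(R_B)$. Hence $R_A \path M \path R_B$ is a walk of length at most two.

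The main obstacle is purely bookkeeping: some of the vertices $A, R_A, M, R_B, B$ in the assembled walk may coincide, but every coincidence only shortens the walk and leaves the bound $d(A,B) \le 4$ intact. In the extreme case that $R_A$ and $R_B$ already commute directly (e.g.\ both are diagonal, or share a common kernel direction), one gets $d(R_A, R_B) \le 1$ and the auxiliary matrix $M$ is unnecessary; in all other cases the construction of $M$ above does the job.
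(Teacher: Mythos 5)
Your proposal is correct and follows essentially the same route as the paper: use Lemma~\ref{lem:rk-1'=all} to place rank-one neighbours $R_A$ of $A$ and $R_B$ of $B$, then bridge them with a third rank-one matrix $M=vu^{\tr}$ built from a common kernel vector of the row parts and a common annihilator of the column parts (the paper writes this middle matrix as $R_2 = xh^{\tr}$, an apparent typo for $zh^{\tr}$, and your $v,u$ play exactly the roles of the paper's $z,h$).
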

\begin{proof}
Let $A$ and $B$ be arbitrary matrices. By Lemma~\ref{lem:rk-1'=all} there exist rank-one
matrices $R_1= xf^{\tr}\in \C(A)$ and $R_3= y  g^{\tr}\in \C(B)$.
Since $n\ge 3$ we can find a nonzero $z\in\FF^n$ with $f^{\tr}z=0=g^{\tr}z$ and  a nonzero  $h\in\FF^n$
with $ h^{\tr}x =0=h^{\tr}y $. Then for a rank-one matrix $R_2= x h^{\tr}$ we obtain
 $A=R_0\path  R_1\path  R_2\path R_3\path R_4=B$.
\end{proof}

\begin{lemma}\label{lem:splosna-o-preseku-komuatntov}%
Let~$A=J_{k_1}\oplus J_{k_2}\in M_{k_1+k_2}(\FF)$ be a nilpotent matrix with two Jordan cells
of sizes~$k_1,k_2\ge 1$. Then $d(A,R) \le 2$ for an arbitrary rank-one $R\in M_{k_1+k_2}(\FF)$.
\end{lemma}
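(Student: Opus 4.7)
The plan is to exhibit a non-scalar matrix $R'$ belonging to $\C(A)\cap\C(R)$. Any such $R'$ forces $d(A,R)\le 2$, because either $R'\in\{A,R\}$, in which case $A$ and $R$ themselves commute (giving $d(A,R)\le 1$), or else $A\path R'\path R$ is a path of length two.

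To construct $R'$, I would first write the rank-one matrix as $R=xy^{\tr}$ with nonzero $x,y\in\FF^{k_1+k_2}$, and exploit the explicit form of $A=J_{k_1}\oplus J_{k_2}$. The two kernels $\Ker A=\Sp(e_1,e_{k_1+1})$ and $\Ker A^{\tr}=\Sp(e_{k_1},e_{k_1+k_2})$ are both two-dimensional. A direct computation shows that whenever $u\in\Ker A$ and $v\in\Ker A^{\tr}$, the rank-one matrix $uv^{\tr}$ commutes with $A$, since $A(uv^{\tr})=(Au)v^{\tr}=0$ and $(uv^{\tr})A=u(A^{\tr}v)^{\tr}=0$. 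Hence every such $uv^{\tr}$ lies automatically in $\C(A)$.

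Next I would further require $u^{\tr}y=0$ and $v^{\tr}x=0$. Each is a single linear equation on a two-dimensional space, so nonzero solutions $u\in\Ker A$ and $v\in\Ker A^{\tr}$ always exist. Setting $R':=uv^{\tr}$ produces a nonzero rank-one matrix, which is non-scalar because $n=k_1+k_2\ge 3$. Moreover $R'R=u(v^{\tr}x)y^{\tr}=0=x(y^{\tr}u)v^{\tr}=RR'$, so $R'\in\C(R)$, and by the preceding paragraph also $R'\in\C(A)$. The conclusion then follows from the reduction described in the first paragraph.

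I do not foresee a substantial obstacle. The only subtlety is the chance that $R'$ happens to coincide with $A$ or with $R$ (for example when $A$ itself has rank one, as when $\{k_1,k_2\}=\{1,2\}$); but in that event $A$ and $R$ are already equal or adjacent in the commuting graph, so the bound $d(A,R)\le 2$ holds without further work.
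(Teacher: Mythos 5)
Your proof is correct and shares the paper's structural idea, but resolves the feasibility step more cleanly. Both arguments look for a non-scalar $Z$ with $ZA=AZ=0$ (hence $Z\in\C(A)$) that also commutes with $R$. The set $\{Z: AZ=ZA=0\}$ is the four-dimensional space of matrices whose columns lie in $\Ker A$ and whose rows lie in $(\Ker A^{\tr})^{\tr}$, and the paper parametrizes it explicitly as $Z=x_1 E_{1k_1}+x_2 E_{1k}+x_3 E_{(k_1+1)k_1}+x_4 E_{(k_1+1)k}$, derives from $ZR=RZ=0$ a homogeneous $4\times 4$ linear system in $x_1,\dots,x_4$, and checks its coefficient matrix is singular. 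You instead restrict to the rank-one (decomposable) elements $Z=uv^{\tr}$ of that same space, with $u\in\Ker A$ and $v\in\Ker A^{\tr}$; the constraint set then decouples into a single linear equation $u^{\tr}y=0$ on the two-dimensional $\Ker A$ and a single linear equation $v^{\tr}x=0$ on the two-dimensional $\Ker A^{\tr}$, each of which obviously has a nonzero solution. The determinant computation disappears, and the non-triviality of the solution is manifest. One small correction: a nonzero rank-one matrix is non-scalar already for $n\ge 2$; your invocation of $n\ge 3$ is both unnecessary and not literally available, since the hypothesis permits $k_1=k_2=1$, in which case $n=2$ and $A=0$ is scalar (the paper dismisses this degenerate case separately at the outset). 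This does not affect the substance of your argument.
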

\begin{proof}
If $k_1=k_2=1$ then $A$ is a zero matrix and the conclusion is then imminent. Otherwise,
$k_1\ge2$ or $k_2\ge 2$. Let $k=k_1 + k_2$. It is elementary that the matrix $Z=x_1
E_{1k_1} +x_2 E_{1k}+x_3E_{(k_1+1)k_1}+x_4E_{(k_1+1)k}$ commutes with $A$ for any choice
of $x_1,x_2,x_3,x_4\in\FF$. Actually, $ZA = AZ = 0$. Moreover, the matrix $Z$ is non-scalar,
except when~$x_1=x_2=x_3=x_4=0$. Therefore, it suffices to show that for an arbitrary
rank-one matrix $R$ there exist $x_1,\dots,x_4 \in \FF$, such that at least one of them
is nonzero and $ZR=RZ = 0$. To this end, write $R= a  b ^{\tr}$ for some column vectors
$ a =(a_1,\dots,a_k)^{\tr}$ and $ b =(b_1,\dots,b_k)^{\tr}$. Then $ZR=RZ=0$ is
equivalent to $Z a =Z^{\tr} b =0$, hence we must solve a homogeneous system of four linear
equations
\begin{equation}\label{eq:2jordan-blocks-commute-wth-rk1}
\begin{aligned}
x_1 a_{k_1}+ x_2 a_k &=0,\\
x_3 a_{k_1} +x_4 a_k &=0,\\
x_1 b_1+ x_3 b_{k_1+1}&=0,\\
x_2 b_1 +x_4 b_{k_1+1}&=0,
\end{aligned}
\end{equation}
$x_1,x_2,x_3,x_4$ unknown. The corresponding matrix of coefficients is equal to
$$\begin{bmatrix}
 a_{k_1} & a_k & 0 & 0 \\
 0 & 0 & a_{k_1} & a_k \\
 b_1 & 0 & b_{k_1+1} & 0 \\
 0 & b_1 & 0 & b_{k_1+1}
\end{bmatrix}$$
and it is easy to check that it is always singular. Therefore the system~(\ref{eq:2jordan-blocks-commute-wth-rk1})
has a nontrivial solution. This solution defines a non-scalar matrix $Z$, which commutes
with $A$ and $R$, so $d(A,R) \le 2$ in $\Gamma(M_{k_1+k_2}(\FF))$.
\end{proof}
\begin{lemma}\label{lem:A'capR'}%
Suppose $A$ is not minimal. Then $d(A,R) \le 2$ for an arbitrary rank-one matrix $R\in M_n(\FF)$.
\end{lemma}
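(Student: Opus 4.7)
The plan is to reduce to the two-Jordan-block setting of Lemma~\ref{lem:splosna-o-preseku-komuatntov} by exploiting the derogatory structure of $A$ and extending the commuting matrix built there by a zero block. Since $A$ is not minimal, the characterization recalled in the introduction shows that $A$ is derogatory, so its Jordan canonical form contains at least two Jordan blocks $J_{k_1}(\lambda)$, $J_{k_2}(\lambda)$ sharing some eigenvalue $\lambda\in\FF$. Conjugation is an automorphism of $\Gamma(M_n(\FF))$ sending rank-one matrices onto rank-one matrices, so after relabeling I may assume
\[
A=\bigl(J_{k_1}(\lambda)\oplus J_{k_2}(\lambda)\bigr)\oplus A''\in M_n(\FF),
\]
where $A''\in M_{n-k_1-k_2}(\FF)$ collects the remaining Jordan blocks (and is vacuous if $k_1+k_2=n$).

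Next I would define the candidate intermediate vertex by mimicking the proof of Lemma~\ref{lem:splosna-o-preseku-komuatntov}: set
\[
\tilde Z=x_1E_{1k_1}+x_2E_{1(k_1+k_2)}+x_3E_{(k_1+1)k_1}+x_4E_{(k_1+1)(k_1+k_2)}\in M_{k_1+k_2}(\FF)
\]
and $Z=\tilde Z\oplus 0\in M_n(\FF)$. A short block computation shows $\tilde Z\bigl(J_{k_1}(\lambda)\oplus J_{k_2}(\lambda)\bigr)=\lambda\tilde Z=\bigl(J_{k_1}(\lambda)\oplus J_{k_2}(\lambda)\bigr)\tilde Z$, and since the lower block of $Z$ is zero, $Z$ commutes with $A$ regardless of the choice of $x_1,\dots,x_4$.

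It remains to choose $x_1,\dots,x_4$, not all zero, so that $Z$ also commutes with $R=ab^{\tr}$. Split $a=\binom{a_1}{a_2}$ and $b=\binom{b_1}{b_2}$ with $a_1,b_1\in\FF^{k_1+k_2}$. A direct block calculation of $ZR-RZ$ shows that commutation is equivalent to the pair of vector equations $\tilde Z a_1=0$ and $\tilde Z^{\tr} b_1=0$. Written out in coordinates these are precisely the homogeneous system~(\ref{eq:2jordan-blocks-commute-wth-rk1}) from the proof of Lemma~\ref{lem:splosna-o-preseku-komuatntov}, with $a,b$ there replaced by $a_1,b_1$; the coefficient matrix has the same block form and is always singular, so a nontrivial solution exists. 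The corresponding $Z$ is nonzero with vanishing lower block, hence non-scalar, and the path $A\path Z\path R$ yields $d(A,R)\le 2$.

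The main subtlety I foresee is confirming that the extra block $A''$ does not introduce new obstructions. What forces the appearance of the two separate annihilation conditions (and not merely commutation of $\tilde Z$ with $a_1 b_1^{\tr}$) is the off-diagonal pieces of $R$: the blocks $a_1 b_2^{\tr}$ and $a_2 b_1^{\tr}$ appear in the top-right and bottom-left corners of $ZR-RZ$, and making those vanish forces both $\tilde Z a_1=0$ and $\tilde Z^{\tr} b_1=0$. Once these conditions are isolated, the linear system is identical to the one already solved inside Lemma~\ref{lem:splosna-o-preseku-komuatntov}, so no additional work is required.
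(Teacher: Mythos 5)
Your proof is correct, and the core mechanism is the same as the paper's: reduce to two Jordan cells sharing an eigenvalue and reuse the singular linear system from Lemma~\ref{lem:splosna-o-preseku-komuatntov}. Where you diverge is in how you extend the two-block commuting matrix to $M_n(\FF)$. The paper applies Lemma~\ref{lem:splosna-o-preseku-komuatntov} as a black box to an auxiliary rank-one $\widehat{R}$ (chosen by cases according to whether $x_1$ or $y_1$ vanishes), extracts eigenvector relations $\widehat{Z}x_1=\lambda x_1$, $\widehat{Z}^{\tr}y_1=\lambda y_1$, and then extends by $\lambda\Id_{n-k}$. You instead open the black box, take the explicit $\tilde Z$ of that lemma, impose the annihilation conditions $\tilde Z a_1=0$, $\tilde Z^{\tr}b_1=0$ directly (same coefficient matrix, always singular), and extend by the zero block. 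This is slightly cleaner: the system is homogeneous in $a_1,b_1$, so it handles $a_1=0$ or $b_1=0$ automatically and you avoid the paper's case split and eigenvector argument. One small inaccuracy worth flagging: you assert that $ZR=RZ$ is \emph{equivalent} to $\tilde Z a_1=0$ and $\tilde Z^{\tr}b_1=0$, but the off-diagonal blocks $\tilde Z a_1 b_2^{\tr}$ and $a_2 b_1^{\tr}\tilde Z$ only force these two conditions when $b_2\neq 0$ and $a_2\neq 0$ respectively; the conditions are sufficient, not necessary. Since you only use sufficiency, the argument stands. You should also note, as you implicitly do, that $Z=\tilde Z\oplus 0$ is automatically non-scalar once $\tilde Z\neq 0$: either the zero block forces the scalar to be zero, or (when $k_1+k_2=n$, so $n\ge 3$) the support pattern of $\tilde Z$ is incompatible with a scalar matrix.
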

\begin{proof}
Using conjugation we might assume~$A$ is already in its Jordan form. Since it is not
minimal, hence it is derogatory, at least two Jordan cells contain the same eigenvalue.
Let $k_1, k_2\ge 1$ be their sizes. Define also $k=k_1+k_2$. Moreover,
$\C(A)=\C(A-\lambda\Id)$ so we may also assume that these two Jordan cells are nilpotent
and that $A=J_{k_1}\oplus J_{k_2}\oplus \tilde{A}$.  It is elementary that
$$\C\bigl(J_{k_1}\oplus J_{k_2}  \bigr)\oplus (\FF\Id_{n-k})\subseteq \C(A).$$

Now, let $R= x  y ^{\tr}$ be an arbitrary rank-one matrix.
Decompose $ x = x _1\oplus  x _2\in\FF^{k}\oplus
\FF^{n-k}$ and $ y = y _1\oplus  y _2\in\FF^{k}\oplus
\FF^{n-k}$. We claim that there exists a non-scalar matrix
$\widehat{Z} \in \C(J_{k_1}\oplus J_{k_2} )$ satisfying simultaneously
$\widehat{Z} x _1=\lambda  x _1$ as well as $\widehat{Z}^{\tr} y
_1=\lambda y _1$ for some $\lambda\in\FF$. In fact, this is
trivial when $ x_1= y_1=0$. Otherwise we let
$$\widehat{R}=\begin{cases}
 x _1 y _1^{\tr};&  x _1, y _1\ne0\\
 e _1 y _1^{\tr}; &  x _1=0\\
 x _1 e _1^{\tr}; &  y _1=0
\end{cases}\in M_{k}(\FF),$$
where $ e _1\in \FF^{k}$  is the first vector of the standard basis. By
Lemma~\ref{lem:splosna-o-preseku-komuatntov} there exists at least one non-scalar matrix
$\widehat{Z}\in M_{k}(\FF)$ which commutes  with $\widehat{R}$ as well as with
$J_{k_1}\oplus J_{k_2}\in M_{k}(\FF)$. Therefore, if $ x _1, y _1\ne0$, then $\widehat{Z} x
_1 y _1^{\tr}= x _1(\widehat{Z}^{\tr} y _1)^{\tr}$ and we obtain $\widehat{Z} x _1=\lambda x _1$,
and $\widehat{Z}^{\tr} y _1=\lambda  y _1$ for some $\lambda \in \FF$. If $x_1 = 0$, then
similarly as above $\widehat{Z} e _1=\lambda e _1$, and $\widehat{Z}^{\tr} y _1=\lambda  y _1$.
Obviously $\widehat{Z} x_1=\lambda x_1$. Likewise we argue  if $ y _1=0$.

With the help of~$\widehat{Z}$ we define $Z=\widehat{Z}\oplus
\lambda\Id_{k}\in\C(J_{k_1}\oplus J_{k_2}  )\oplus (\FF\Id_{n-k})\subseteq
\C(A)$. Clearly, $Z x =\widehat{Z} x _1\oplus \lambda x _2=\lambda x $, and similarly,
$Z^{\tr} y =\lambda y $, so~$Z$ commutes with $R= x  y ^{\tr}$ and with $A$.
\end{proof}

 Akbari, Mohammadian, Radjavi, and Raja
proved in~\cite[Lemma 2]{AMRR06} that, for matrices of size $n\ge 3$, the diameter of the
commuting graph is at most four (see also Corollary~\ref{cor:maximal-distance-in-graph=4}
above) and that $d(J,J^{\tr})=4$, thus showing that the diameter of the commuting graph
of matrix algebra over algebraically closed fields is equal to four. It is
well-known~\cite[p.~134]{Horn-Johnson} that the transpose of a matrix is conjugate to the
original, so \cite[Lemma 2]{AMRR06} implies that the maximal distance from $J$ to some of
its conjugates is equal to four. Our next lemma will strengthen their result by
considering maximal distances between an arbitrary minimal matrix $A\in M_n(\FF)$ and
matrices from conjugation orbit $\{S^{-1}BS;\;\;S\hbox{ invertible}\}$ of another minimal
matrix $B\in M_n(\FF)$. Recall that a minimal matrix is conjugate to
$\bigoplus\limits_{i=1}^{k} J_{n_i}(\lambda_i), $ where $\lambda_i \ne \lambda_j$ for $i
\ne j$, and where $(n_1,n_2,\ldots,n_k)$ is a partition of $n$. We will  show below that
for any two given partitions of $n$, we can find two minimal matrices with their Jordan
forms corresponding to these two partitions, at  distance four. One of the matrices is
already in its Jordan canonical form, while the other  is a matrix, conjugated to its
Jordan canonical form by an invertible matrix with all of its minors  nonzero. Such
invertible matrix is for example a Cauchy matrix $\big[\frac{1}{x_i-y_j}\big]_{ij}$ (see
\cite{Schechter}).

\begin{theorem}\label{thm:d=4}%
 Let $S$ be any matrix with all of its minors  nonzero. For any two minimal matrices
 $A=\bigoplus\limits_{i=1}^{k} J_{n_i}(\lambda_i)\in M_n(\FF)$
 and $B=\bigoplus\limits_{i=1}^{l} J_{m_i}(\mu_i)\in M_n(\FF)$,
 we have  $d(A ,S^{-1} B S)=4$.
\end{theorem}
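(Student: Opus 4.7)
The upper bound $d(A, S^{-1}BS)\le 4$ is Corollary~\ref{cor:maximal-distance-in-graph=4}, so it suffices to rule out a path of length at most three between $A$ and $S^{-1}BS$. Equivalently, I must show there are no non-scalar matrices $M = p(A) \in \C(A) = \FF[A]$ and $N' = S^{-1}q(B)S \in \C(S^{-1}BS) = S^{-1}\FF[B]S$ with $MN' = N'M$. I assume toward a contradiction that such a commuting pair exists.

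The hypothesis that $S$ has all nonzero minors yields the following genericity lemma (since, by Jacobi's identity for complementary minors, $S^{-1}$ inherits the same property): for any two \emph{coordinate subspaces} $V, W \subseteq \FF^n$, meaning spans of subsets of the standard basis, one has
\begin{equation*}
\dim\bigl(V \cap S^{-1}W\bigr) = \max\bigl(0,\dim V + \dim W - n\bigr).
\end{equation*}
Indeed, $v \in V \cap S^{-1}W$ translates into a homogeneous linear system whose coefficient matrix is a submatrix of $S$ whose every square block is nonsingular by hypothesis.

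Since $\FF$ is algebraically closed, the commuting pair $(M, N')$ decomposes $\FF^n$ into joint generalized eigenspaces. Because $A$ and $B$ are in Jordan form relative to the standard basis, each $V_\alpha^{\mathrm{gen}} := \ker(M-\alpha I)^n$ and $W_\beta^{\mathrm{gen}} := \ker(q(B)-\beta I)^n$ is a coordinate subspace, of dimensions $a_\alpha = \sum_{p(\lambda_i)=\alpha} n_i$ and $b_\beta = \sum_{q(\mu_j)=\beta} m_j$ (so $\sum_\alpha a_\alpha = \sum_\beta b_\beta = n$). Summing the lemma over all joint pieces yields the identity
\begin{equation*}
n = \sum_{\alpha,\beta}\max\bigl(0, a_\alpha + b_\beta - n\bigr).
\end{equation*}
When both $M$ and $N$ have at least two distinct eigenvalues, say $r$ and $s$, a mass-concentration argument shows the right-hand side is maximized by the partitions $a=(n-r+1,1,\ldots,1)$ and $b=(n-s+1,1,\ldots,1)$, giving the value $n-r-s+2 < n$: contradiction.

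The remaining case---where $M$ (symmetrically $N$) has only a single eigenvalue---is the main technical obstacle, since the joint eigenspace counting degenerates. In this case $M = \alpha I + M_0$ with $M_0$ a nonzero nilpotent in $\FF[A]$, and the minimality of $A$ forces $M_0 = \phi(A)g(A)$ with $\phi(z) = \prod_i (z-\lambda_i)$; both $\ker M_0$ and $\ker M_0^{\tr}$ are then coordinate subspaces (left- and right-flushed within each Jordan cell of $A$). Applying the genericity lemma both to the common eigenvector condition $v \in \ker M_0 \cap S^{-1}\ker N_0$ and, dually, to a common left-null vector $\phi$ with $\phi M_0 = 0 = \phi S^{-1}N_0 S$, and then expanding $SM_0S^{-1}$ as a sum of rank-one matrices of the form (column of $S$)(row of $S^{-1}$), the commutation $[M_0, S^{-1}N_0S] = 0$ ultimately reduces to the vanishing of a specific square minor of $S$---which is ruled out by hypothesis.
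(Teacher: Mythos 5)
Your strategy is genuinely different from the paper's, and the first half of it is both correct and elegant. The paper reduces to maximal $X\in\C(A)$, $Y\in\C(S^{-1}BS)$ (hence square-zero or idempotent), and then runs an explicit minor/rank computation in three cases. You instead use that $\C(A)=\FF[A]$ and $\C(S^{-1}BS)=S^{-1}\FF[B]S$ for nonderogatory $A,B$, encode ``all minors of $S$ nonzero'' as the transversality statement $\dim(V\cap S^{-1}W)=\max(0,\dim V+\dim W-n)$ for coordinate subspaces, and compare $n=\sum_{\alpha,\beta}\dim V_{\alpha\beta}$ with the forced dimensions $\max(0,a_\alpha+b_\beta-n)$. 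The identification $V_{\alpha\beta}=V_\alpha^{\rm gen}\cap S^{-1}W_\beta^{\rm gen}$, the fact that these generalized eigenspaces are coordinate subspaces, and the convexity/vertex bound $\sum_{\alpha,\beta}\max(0,a_\alpha+b_\beta-n)\le n-r-s+2<n$ when $r,s\ge 2$ are all correct. This cleanly disposes of the case where both $M$ and $N'$ have at least two eigenvalues, which in the paper's scheme is the ``both idempotent'' case.

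The problem is that the remaining case — some candidate has a single eigenvalue, i.e.\ is scalar plus a nonzero nilpotent of $\FF[A]$ — is where essentially all of the paper's work lies (its first two cases, square-zero vs.\ square-zero and square-zero vs.\ idempotent), and your treatment of it is not a proof. You assert that ``expanding $SM_0S^{-1}$ as a sum of rank-one matrices \dots\ the commutation ultimately reduces to the vanishing of a specific square minor of $S$,'' but you never identify the minor, never show that the commutation relation actually produces a determinantal obstruction of the right shape, and never address that an arbitrary nilpotent $M_0\in\FF[A]$ can have rank as large as $n-k$ rather than $\le n/2$. The paper's corresponding step is nontrivial: after reducing to maximal (hence square-zero, rank $\le n/2$) $X$, it picks $e_t$ with $Xe_t=\alpha e_s\ne 0$, notes that $Se_s$ has no zero entries while the Toeplitz block matrix $T$ has at least $n/2$ zero rows, and extracts an explicit $(r+1)\times(r+1)$ minor of the augmented matrix $[M\mid TSe_s]$ whose expansion is a nonzero scalar times an $r\times r$ minor of $S$. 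You would either have to carry out an analogue of that computation for your general nilpotent $M_0$, $N_0$, or first pass to maximal $M\succ p(A)$, $N'\succ S^{-1}q(B)S$ (which your $\C=\FF[\cdot]$ reduction permits) to regain the square-zero/idempotent structure. As written, the one-eigenvalue case is a genuine gap, and without it the argument does not establish $d(A,S^{-1}BS)\ge 4$.
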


\begin{proof}
  Assume erroneously that  $A$ and $B$, as defined in Lemma, are not at the maximal distance, i.e.,
   $d(A,S^{-1} B S) \leq 3$.
  Since $\C(A)=\C(\alpha A)$ for all nonzero $\alpha \in \FF$, we can lengthen every path by adding vertices
 which correspond to scalar multiples of matrices. So, there  exists a path $A\path X\path Y\path S^{-1} B S$ of length $3$
  in
  $\Gamma(M_n(\FF))$. We can assume
  without loss of generality that $X$ and $Y$ are maximal matrices. Namely, if  $X$ is not
  maximal, then
  there exists a maximal $X' \succ X$, and since $A,Y \in \C(X) \subseteq \C(X')$, we could consider a path
  $A\path X'\path Y\path S^{-1} B S$ of length 3. Likewise for $Y$.

  We will show that no two maximal matrices $X\in \C(A)$ and $Y\in \C(S^{-1} B S)$  commute
  and thus obtain  a contradiction to the assumption    $d(A,S^{-1} B S) \leq 3$.
  Since all maximal matrices are equivalent either to a square-zero matrix or to an idempotent matrix we will
  consider three  cases.

  First, let us assume that both $X$ and $Y$ are square-zero but nonzero.
  Since $A=\bigoplus_{i=1}^{k} J_{n_i}(\lambda_i)$
 and $B=\bigoplus_{i=1}^{l} J_{m_i}(\mu_i)$ are minimal,
  so $\lambda_i\ne \lambda_j$ and $\mu_i\ne\mu_j$ for $i\ne j$, we have
  that $$X=T_1\oplus T_2 \oplus\ldots\oplus T_k\quad\hbox{ and }\quad Y=S^{-1}(T'_1\oplus T'_2 \oplus\ldots\oplus T'_l)S,$$
  where all $T_i$ and $T'_j$ are upper triangular Toeplitz matrices.
  Clearly then $\Im X=\Lin \{e_{\sigma(1)},e_{\sigma(2)},\ldots,e_{\sigma(r)}\}$ for
  some permutation  $\sigma$ of length $n$ and integer $r$,
 $1\leq r \leq \frac{n}{2}$.
  Moreover,  by the block-Toeplitz  structure of $X\ne0$ there exist indices $t$ and $s$ such that  $Xe_t=\alpha e_s\ne0$.
  For the sake of simplicity let us denote $T=T'_1\oplus T'_2 \oplus\ldots\oplus T'_l$. Now, if $YX=XY$, we would have that
  $S^{-1}TSX e_t \in \Im X$. This would imply,
  $$\alpha T S e_s  \in
    S(\Im X)=\Lin\{Se_{\sigma(1)},Se_{\sigma(2)},\ldots,Se_{\sigma(r)}\}$$
  which  is clearly possible
   if and only if the rank of the $n\times r$ matrix
  $M=\left[ \frac{1}{\alpha}Se_{\sigma(1)},\frac{1}{\alpha}Se_{\sigma(2)},\ldots,\frac{1}{\alpha}Se_{\sigma(r)}
    \right]$
  is the same as the rank of the augmented matrix $\left[ M \, \vert \, TSe_s\right]$. However, we will show that
  this is not the case. Since all  minors of $S$ are nonzero, its $s$-es column $Se_s$ has no zero entries and as such cannot
  be annihilated by a nonzero block-Toeplitz matrix $T$.
  Note that $T$ is  also square-zero and so it has at least $ \frac{n}{2}$ zero rows.
  Recall that $r\le \frac{n}{2}$, consequently there exists an $(r+1)\times (r+1)$ submatrix of the augumented
  matrix, having in the
  last column exactly $r$ zeros and one nonzero element. By expanding this $(r+1)\times (r+1)$  minor
  by the last column, we observe that it is equal to a multiple of an $r \times r$ minor of matrix $M$
  which is
  equal to $(\frac{1}{\alpha})^r$ times an $r \times r$ minor of $S$. By the assumption, every minor of $S$ is nonzero and so
    $r+1=\rk \left[ M \, \vert \, TSe_s\right] > \rk M=r$. This  implies
   $TS e_s \notin S(\Im X)$, a contradiction.

 Second, suppose   a non-scalar idempotent $X\in\C(A)$ commutes with a non-scalar square-zero $Y\in\C(S^{-1}BS)$. Without loss of generality,
 $r=\rk X \leq \frac{n}{2}$, otherwise take $\Id-X$ instead of $X$. So,
 $X=\sum_{i=1}^r E_{\sigma(i)\sigma(i)} $ and
  $Y=S^{-1}TS$,   where $\sigma$ and $T=T'_1\oplus T'_2 \oplus\ldots\oplus T'_l$ are as above.
  Define  $t=\sigma(1)$.  Similarly as before, if $YX=XY$ we would have that
  $S^{-1}TSXe_t\in \Im X=\Lin \{e_{\sigma(1)},e_{\sigma(2)},\ldots,e_{\sigma(r)}\}$,
  or, equivalently, $TSe_t\in \Lin \{Se_{\sigma(1)},Se_{\sigma(2)},\ldots,Se_{\sigma(r)}\}$. We proceed
  as in the first case to obtain a contradiction.

  By the symmetry the only case remaining is the case when
  $X$ and $Y$ are both non-scalar idempotents. Write
  $X=\sum_{i=1}^r E_{\sigma(i)\sigma(i)} $ and
  $Y=S^{-1} P S$ for $P=\sum_{i=1}^s E_{\tau(i)\tau(i)}$.
   Without loss of generality, $r,s \leq \frac{n}{2}$, since otherwise we would substitute
   $X$  by $\Id-X$ or  $Y$ by $\Id-Y$.
  Again,   take  $t=\sigma(1)$. If $YX=XY$ then
  $S^{-1}PSXe_t\in \Im X=\Lin \{e_{\sigma(1)},e_{\sigma(2)},\ldots,e_{\sigma(r)}\}$,
  or, equivalently, $PSe_t\in \Lin \{Se_{\sigma(1)},Se_{\sigma(2)},\ldots,Se_{\sigma(r)}\}$.
  Since $\rk P \leq \frac{n}{2}$, it follows that the vector $PSe_t$ has at least $\frac{n}{2}$ zero
  entries. Note that $\frac{n}{2} \geq r$ and $Se_t$ is the $t$-th column of $S$, so it has no zero entries. This
   gives
  $PSe_t\ne0$, a contradiction as in the first case.

This shows $d(A,S^{-1} B S) \geq 4$. But the  diameter of commuting graph is equal to four (see~\cite[Lemma~2]{AMRR06}),
hence $d(A,S^{-1} B S) =4$.
\end{proof}

\begin{remark}The matrix $S^{-1}BS$ from Theorem~\ref{thm:d=4} can be rather complicated. In a special case, when
$A$ is nilpotent we can take $A=J_n$ to achieve that $d(J_n,B)=4$ for any  companion
matrix $B$ in the lower-triangular form. This can be seen by
 a slight adaptation of the proof of~\cite[Lemma 2]{AMRR06}. For convenience we sketch the main points of the
proof. First, it suffices to prove that each maximal $D\in\C(J_n)=\FF[J_n]$ satisfies
$\C(B)\cap\C(D)=\FF\Id$. We may further assume $D=\sum_{i=r}^{n-1}d_i J_n^i$,
$\frac{n}{2}\le r\le n-1$, is square-zero and hence write it as a $3\times 3$ block
matrix with  block at position $(1,3)$ being invertible upper-triangular Toeplitz of size
$(n-r)\times (n-r)$, while all the rest blocks are zero. If $Z\in\C(B)\cap\C(D)$ then in
particular it commutes with $D$. By direct computation using block-matrix structure we
see that the $(n,1)$ entry of $Z$ is zero. However, $Z\in\C(B)$ and since companion
matrices are nonderogatory we have $Z=\sum_{i=0}^{n-1}\lambda_i B^i$. By considering the
images of basis vectors we see that $B^i=\left[\begin{matrix}
 0_{i, (n-i)} & \bigstar_{i, i}\\
\Id_{n-i}& \bigstar_{(n-i), i}
\end{matrix}\right]$. Since $(n,1)$ entry of $Z$ is $0$ we see that
$\lambda_{n-1}=0$. Proceeding inductively we see that $\lambda_i=0$ for every
$i=(n-1),\dots,1$, whence $Z$ is scalar.
\end{remark}

By Theorem~\ref{thm:d=4} there exist different types of matrices which are at the maximal
distance. Next we show that we can find infinitely many  matrices which are in  the
commuting graph pairwise at the maximal distance. Actually, we find an induced graph
which is a tree with an internal vertex and all of its leaves at distance two from the
internal vertex.

\begin{theorem}
There exist an infinite
family of matrices
$(X_\alpha)_\alpha\in M_n(\FF)$ and a rank-one  matrix $Z$ such that
$d(X_\alpha,X_\beta)=4$ for $\alpha\ne \beta$ and $d(X_\alpha,Z)=2$
for all $\alpha$.
\end{theorem}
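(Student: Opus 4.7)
The plan is to build the infinite family $(X_\alpha)$ as conjugates $X_\alpha = S_\alpha^{-1} J_n S_\alpha$ of the nilpotent Jordan block $J_n$, choosing each $S_\alpha$ from a carefully specified subvariety of $GL_n(\FF)$. Pairwise distance $4$ will follow from Theorem~\ref{thm:d=4}; distance $2$ from a fixed rank-one $Z$ will follow from a direct analysis of $\C(J_n)\cap\C(SZS^{-1})$.

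Fix $Z := xy^{\tr}$ with $x := e_1+e_2$ and $y := e_{n-1}+e_n$. For any invertible $S$, set $u := Sx$ and $w := (S^{-1})^{\tr}y$, so that $SZS^{-1} = uw^{\tr}$. Using $\C(J_n)=\FF[J_n]$ together with the standard fact that $\ker(p(J_n)-p(0)\Id) = \Lin\{e_1,\ldots,e_m\}$, where $m$ is the order of vanishing of $p(t)-p(0)$ at $t=0$, one verifies
\[
d(J_n, uw^{\tr}) \leq 2 \iff u_n = 0 \text{ and } w_1 = 0,
\]
and $d(J_n, uw^{\tr}) = 1$ iff additionally $u\in\FF e_1$ and $w\in\FF e_n$. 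By conjugation invariance of the commuting graph, $d(X_\alpha, Z)=2$ thus holds provided $S_\alpha$ lies in the subvariety $\mathcal V := \{S\in GL_n(\FF): (Sx)_n = 0,\ ((S^{-1})^{\tr}y)_1 = 0\}$ and additionally $S_\alpha x \notin \FF e_1$ and $(S_\alpha^{-1})^{\tr} y \notin \FF e_n$.

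Inductively, choose $S_\alpha\in\mathcal V$ avoiding the Zariski-closed bad events ``some minor of $S_\alpha$ vanishes'', ``$S_\alpha x\in\FF e_1$'', ``$(S_\alpha^{-1})^{\tr}y\in\FF e_n$'', and ``some minor of $S_\beta S_\alpha^{-1}$ vanishes'' for each previously chosen $\beta$. The existence of such $S_\alpha$ at every stage hinges on showing that $\mathcal V$ is not contained in any ``vanishing-minor'' hypersurface, which I would verify by exhibiting one concrete $S_0\in\mathcal V$ with all minors nonzero (for instance, a rank-one perturbation of a Cauchy matrix tailored to satisfy the two defining equations of $\mathcal V$). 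Once $S_0$ is available, the Zariski-open subset of $\mathcal V$ satisfying all required conditions is dense in $\mathcal V$, hence nonempty by infinitude of $\FF$, yielding the infinite family $(S_\alpha)$. Setting $X_\alpha := S_\alpha^{-1} J_n S_\alpha$, Theorem~\ref{thm:d=4} applied to $A=B=J_n$ and $S=S_\beta S_\alpha^{-1}$ gives $d(X_\alpha, X_\beta)=4$ via conjugation invariance. The main obstacle is the explicit construction of $S_0\in\mathcal V$ with all minors nonzero; the remainder is a routine genericity argument over the algebraically closed (hence infinite) field $\FF$.
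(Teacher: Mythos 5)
Your plan takes a genuinely different route from the paper. The paper works with an explicit base matrix (diagonal with distinct entries for $n\geq 5$; $J_3\oplus 1$ for $n=4$; nilpotents for $n=3$), explicit conjugating matrices $S_\alpha=\Id+R_\alpha$ for concrete rank-one $R_\alpha$, and then verifies $d(X_\alpha,X_\beta)=4$ by a direct, somewhat lengthy computation. You instead conjugate $J_n$, reduce the pairwise-distance-$4$ claim to Theorem~\ref{thm:d=4} by arranging that every $S_\beta S_\alpha^{-1}$ has all minors nonzero, and reduce the distance-$2$ claim to membership in a codimension-two subvariety $\mathcal V\subseteq GL_n(\FF)$. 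Your characterization $d(J_n,uw^{\tr})\le 2\iff u_n=0,\ w_1=0$ (with equality $1$ only when $u\in\FF e_1$, $w\in\FF e_n$) is correct and is a clean reformulation; and the reduction $d(X_\alpha,X_\beta)=d(J_n,(S_\beta S_\alpha^{-1})^{-1}J_n(S_\beta S_\alpha^{-1}))$ is also correct.

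However, the proposal has a genuine gap exactly where you flag it, and it is larger than you suggest. First, you never produce the concrete $S_0\in\mathcal V$ with all minors nonzero; this is not a routine patch, since $\mathcal V$ is cut out by one linear equation $S_{n1}+S_{n2}=0$ and one cofactor equation $M_{1,n-1}=M_{1,n}$ (up to sign), and a ``rank-one perturbation of a Cauchy matrix tailored to satisfy the two equations'' is an idea, not a verification. Second, even granting $S_0$, your nonemptiness argument does not close. To choose $S_{\alpha_{k+1}}$ you need $\mathcal V$ to not be contained in any of the Zariski-closed loci $\{S:\text{some minor of }S_{\alpha_j}S^{-1}=0\}$ for the previously fixed $S_{\alpha_j}$; this is \emph{not} implied by the existence of $S_0\in\mathcal V$ with all minors of $S_0$ itself nonzero. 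You would need either to show $\mathcal V$ is irreducible and that each such bad locus cuts $\mathcal V$ properly (a separate check for every $S_{\alpha_j}$), or to produce, for each fixed $T$ with all minors nonzero, some $S\in\mathcal V$ with all minors of $TS^{-1}$ nonzero. Neither is done. Similarly, the auxiliary conditions $S_\alpha x\notin\FF e_1$ and $(S_\alpha^{-1})^{\tr}y\notin\FF e_n$ are not consequences of $S_0$ having nonzero minors and need their own argument. Until these points are established, the ``routine genericity'' step is not in fact routine, and the proof is incomplete. (By contrast, the paper sidesteps all of this by writing down explicit $S_\alpha$ and verifying noncommutativity of centralizers directly, at the price of a case split on $n$ and a delicate computation in characteristic $2$.)
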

\begin{proof}
We consider three cases separately.

{\bf Case  \bm{$n=3$}}. Choose $Z=E_{11}$ and let the infinite family consist
of rank one
nilpotent matrices
$$R_\alpha=(0,1,\alpha)^{\tr}(0,\alpha,-1), \quad \alpha \in \FF.$$
It is easy to see that each member commutes with $E_{11}$ and that the elements of the family are pairwise at distance two.
For each index
$\alpha \in \FF$ choose a nilpotent $X_\alpha$ such that $X_\alpha ^2=R_\alpha
$. Since $n=3$
all non-scalar matrices, which commute  with  $X_\alpha$ are equivalent to $X_\alpha$ or to   $X_
\alpha^2=R_\alpha$. Therefore, as
$d(X_\alpha^2,X_\beta^2)=2$ for $\alpha\ne\beta$, we see that $d(X_
\alpha,X_\beta)=4$ for
$\alpha\ne\beta$.

  {\bf Case \bm{$n=4$}}.  Choose $\lambda\in\FF\setminus\{0,1\}$. For nonzero
$\alpha\in\FF$ consider rank-one nilpotent matrix
$N_\alpha=(0,\lambda,\lambda\alpha,\lambda)^{\tr}(0,-\alpha,1,0)$ and rank-one idempotent
$P_\alpha=(0,1,\alpha,0)^{\tr}(0,1,0,-1).$
It is a straightforward calculation that all these matrices are pairwise non-commutative but they all commute with $E_{11}$, hence
\begin{equation}\label{eq:n=4}
d(N_\alpha,N_\beta)=d(P_\alpha,P_\beta)=d(N_\alpha,P_\beta)=2
\end{equation}
for every $\alpha\ne \beta\in\FF\setminus\{0\}$. Moreover, there exists a conjugation
such that $S_\alpha^{-1} N_\alpha S_\alpha=E_{13}$ and $S_\alpha^{-1} P_\alpha
S_\alpha=E_{44}$, for example, take
$$S_\alpha=\begin{bmatrix}
 0 & 1 & 0 & 0 \\
 \lambda  & 0 & 0 & -1 \\
 \alpha  \lambda  & 0 & 1 &
   -\alpha  \\
 \lambda  & 0 & 0 & 0
\end{bmatrix}.$$
Then, for each
$\alpha$ we can find a minimal matrix $X_\alpha=S_\alpha (J_3\oplus 1)S_\alpha^{-1}$
with $X_\alpha\prec P_\alpha$ and
$X_\alpha\prec N_\alpha$.

We claim that $d(X_\alpha,X_\beta)=4$. In fact, if a maximal matrix $M$ satisfies $M
\succ X_{\alpha}$, then,
 up to equivalence, either $M=N_\alpha$, or $M=P_\alpha$.
Hence, if $X_\alpha\path  Y_{\alpha,\beta}\path Z_{\alpha,\beta}\path X_{\beta}$ would be
a path of length three, connecting $X_\alpha$ and $X_\beta$ for $\alpha \ne \beta$, then
we may assume without loss of generality that $Y_{\alpha,\beta}$ and $Z_{\alpha,\beta}$
are maximal matrices (see the proof of Theorem~\ref{thm:d=4}). Hence $Y_{\alpha,\beta}$
is equivalent either to $N_{\alpha}$ or $P_{\alpha}$ and $Z_{\alpha,\beta}$ is equivalent
either to $N_{\beta}$ or $P_{\beta}$. This contradicts equation~\eqref{eq:n=4}, so
$d(X_\alpha,X_\beta)\ge 4$ for $\alpha\ne\beta$. Observe that one of the paths from
$X_\alpha$ to $X_\beta$ is $X_\alpha\path  N_\alpha \path  E_{11} \path N_\beta\path
X_\beta$.

{\bf Case \bm{$n\ge 5$}}. Let $A=\diag(\lambda_1,\dots,\lambda_n)$ where $
\lambda_i$ are
pairwise distinct. Consider an infinite family of rank one nilpotent
matrices $R_\alpha$ indexed by scalars  $\alpha\in\FF$:
$$R_\alpha=R+\alpha\tilde{R};\quad\; R=xf^{\tr},\tilde{R}= xg^{\tr},\; x=
\left[\begin{smallmatrix}
1 \\
\vdots\\[2mm]
1\\
\end{smallmatrix}\right],\;f=\left[\begin{smallmatrix}
2-n\\
1 \\
\vdots\\[2mm]
1\\
0
\end{smallmatrix}\right]
,\;g=\left[\begin{smallmatrix}
1-n\\
1 \\
\vdots\\[2mm]
1\\
1
\end{smallmatrix}\right],$$
 Note that $S_\alpha=\Id+R_\alpha
$ is invertible with
  $S_\alpha^{-1}=\Id-R_\alpha$, and that $S_\alpha^{-1}S_\beta
=(\Id-R_\alpha)(\Id+R_\beta) =\Id+(\beta-\alpha)\tilde{R}$. Let
us define for every $\alpha\in \FF$ the matrix
$X_\alpha=S_\alpha AS_\alpha^{-1}.$
We will prove that $d(X_\alpha,X_\beta)=4$ for $\alpha\ne\beta$.

Note first that the distance is invariant for simultaneous conjugation. So, we may
replace $(X_\alpha,X_\beta)$ with $(S_\alpha^{-1}X_\alpha S_\alpha,S_\alpha^{-1}X_\beta
S_\alpha)=(A,SAS^{-1})$, where $S=S_\alpha^{-1}S_\beta=\Id+(\beta-\alpha)\tilde{R}$.
Now, to prove $d(A,SAS^{-1})=4$ it suffices to show that, given any non-scalar matrices
$D_1\in \C(A)$ and $SD_2S^{-1}\in S\C(A)S^{-1}$, they do not commute.

By the choice of minimal $A$, $\C(A)$ consists of diagonal matrices
only, hence $D_1$ and
$D_2$ are diagonal. Assume erroneously that $D_1$ and $SD_2S^{-1}$ do
commute, i.e., that
$D_1(SD_2S^{-1})=(SD_2S^{-1})D_1$, or equivalently,
$$D_1(\Id+\tilde{x} g^{\tr})D_2(\Id-\tilde{x} g^{\tr})
=(\Id+\tilde{x} g^{\tr})D_2(\Id-\tilde{x} g^{\tr})D_1,$$
where  $\tilde{x}=(\beta-\alpha)x$.
Since diagonal matrices commute,   we get after expansion and
simplification
\begin{multline}\label{eq:star}
(D_1\tilde{x})(D_2g)^{\tr}-(D_1D_2\tilde{x})g^{\tr}-
(g^{\tr}D_2\tilde{x})\cdot(D_1\tilde{x}) g^{\tr}\\
=\tilde{x}(D_1D_2g-(g^{\tr}D_2\tilde{x})\cdot D_1g)^{\tr}-
(D_2\tilde{x})(D_1g)^{\tr}.
\end{multline}
  Notice  that an eigenvector  of a  non-scalar diagonal matrix has at least one nonzero entry.
Hence, $\tilde{x}=(\beta-\alpha)(1,\dots,1)^{\tr}$ and $g=(1-n,1,\dots,
1,1)^{\tr}$ can not be
eigenvectors of a non-scalar diagonal matrix. In particular, $g$ and
$D_2g$ are linearly
independent and so there exists a vector $y$ such that $g^{\tr}y=0$ and
$(D_2g)^{\tr}y=1$. Post-multiplying both sides  of equation~
\eqref{eq:star} with $y$, we
now have
$D_1\tilde{x}=\mu \tilde{x}+\nu D_2\tilde{x}$, $\mu=(D_1D_2g-
(g^{\tr}D_2\tilde{x}) D_1g)^{\tr}y$
and $\nu=-(D_1g)^{\tr}y$.
  We infer that $(D_1-\nu D_2)\tilde{x}=\mu \tilde{x}$, hence $(D_1-
\nu D_2)$ is a scalar matrix because
$\tilde{x}$ has all its entries nonzero.
Thus,
$D_1=\lambda\Id+\nu D_2$ for some $\lambda$. This
simplifies the starting
equation $D_1(SD_2S^{-1})=(SD_2S^{-1})D_1$ into
$$D(SDS^{-1})=(SDS^{-1})D;\qquad D=D_2,$$
wherefrom also the derived equation~(\ref{eq:star}) simplifies into
\begin{multline}\label{eq:multline2}
(D \tilde{x})(D g)^{\tr}-(D^2 \tilde{x})g^{\tr}-(g^{\tr}D \tilde{x})
\cdot(D \tilde{x})
g^{\tr}\\
=\tilde{x}(D^2 g-(g^{\tr}D \tilde{x})\cdot D g)^{\tr}-(D \tilde{x})(D
g)^{\tr}.
\end{multline}
  By the similar arguments as above we find a vector $z$ such that
$g^{\tr}z=0$ and
$(Dg)^{\tr}z=1$, and continuing along the lines we see that
$$D\tilde{x}=((D^2g)^{\tr}z-(g^{\tr}D \tilde{x}))\cdot \tilde{x}-D
\tilde{x}.$$
If $\mathrm{char}\,\FF\ne2$ then the above equation implies that  $
\tilde{x}$ is an
eigenvector of a diagonal matrix $D$ which is possible only when $D_2=D
$ is scalar, a
contradiction.

However, if $\mathrm{char}\,\FF=2$ then we choose a vector, still
named $z$, such that
$g^{\tr}z=1$ and $(Dg)^{\tr}z=0$. Similarly as above, this simplifies
equation~\eqref{eq:multline2} into
$$D^2\tilde{x}+(g^{\tr}D \tilde{x})\cdot
D\tilde{x}=\mu\tilde{x};\qquad\mu=(D^2{g})^{\tr}z.$$ Arguing as above, $D^2+(g^{\tr}D
\tilde{x})D-\mu\Id=0$. Thus, $D_2=D$, being non-scalar diagonal,  has exactly two
distinct eigenvalues: $d_1$ and $d_2$ (with multiplicities $k$ and $n-k $, respectively),
because it is annihilated by a quadratic polynomial $p(\lambda)= \lambda^2+(g^{\tr}D
\tilde{x})\lambda-\mu=(\lambda-d_1)(\lambda-d_2)$. With no loss of generality we assume
that $d_1=D_{1,1}$.  Then, comparing the coefficients in characteristics 2, gives
$d_1+d_2=(g^{\tr}D \tilde{x})=(\beta-\alpha) \bigl(
(1-n)d_1+(k-1)d_1+(n-k)d_2\bigr)=(\beta-\alpha)(n-k)(d_1+d_2)$. Since
$\mathrm{char}\,\FF=2$ and  $D$ is not a scalar matrix, we can divide by $d_1+d_2$ to
obtain $(\beta-\alpha)(n-k)=1$. Observe that in characteristic two, $(\beta-\alpha)(n-k)$
is either equal to 0 or $\beta-\alpha$ and since $(\beta-\alpha)(n-k)=1$, we have that
$\beta-\alpha=(\beta-\alpha)(n-k)=1$. and thus $\beta=\alpha+1$. Clearly, we can choose
an infinite subset of indices
$\mathfrak{A}=\{0,\alpha_1,\alpha_1+\alpha_2,\alpha_1+\alpha_2+
\alpha_3,\dots\}\subset\FF$ such that $\alpha-\beta\ne 1$ for $\alpha,\beta\in
\mathfrak{A}$. For this subset, $d(X_\alpha,X_\beta)=4$.\medskip

To prove the rest, observe that the rank-one matrix $$(\Id+ x (f+
\alpha
g)^{\tr})e_1e_1^{\tr}(\Id- x (f+\alpha g)^{\tr})=S_\alpha E_{11} S_
\alpha^{-1}$$
commutes with $X_\alpha=S_\alpha AS_\alpha^{-1}$. Now, since $n\ge 5$
there exists a
nonzero vector $w$ with $w^{\tr}e_1=0=w^{\tr} x =w^{\tr}f=w^{\tr}g$.
Then, a rank-one
matrix $Z=ww^{\tr}$ commutes with $S_\alpha E_{11} S_\alpha^{-1}$
which gives the path
$$X_\alpha\path  S_\alpha E_{11} S_\alpha^{-1}\path  Z.$$
Hence, $d(X_\alpha,Z)\le 2$ for every $\alpha$. Actually, no shorter
path exists, because
otherwise, we could join the shorter path for some $\alpha$ with the
above path for some
other index $\beta$ to obtain that $d(X_\alpha,X_\beta)\le 3$, a
contradiction.
\end{proof}

We next proceed with the classification of matrices which are equivalent to rank-one matrices.
In this classification we will need the following lemma.

\begin{lemma}\label{lem:nonminimal}
Let $n\ge 4$.
Suppose $A \in M_{n}(\FF)$ is
  \begin{enumerate}
   \item[(i)] either a maximal matrix with $2 \leq \rk A \leq n-2$, or
   \item[(ii)] a nilpotent matrix with $A^3=0$ and $\rk(A^2)=1$.
  \end{enumerate}
Then there exists a nonminimal  matrix  $X$, such that $d(A,X) \geq 3$.
\end{lemma}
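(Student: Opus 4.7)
The plan is to produce, in each case, an explicit nonminimal matrix $X$ and to verify $\C(A)\cap\C(X)=\FF\Id$. This will imply $d(A,X)\ge 3$ at once: since $A$ is non-scalar, $A\notin\C(X)$ so $AX\ne XA$; and any potential length-two path $A\path Y\path X$ can be rerouted via the $\prec$-lifting trick used in the proof of Theorem~\ref{thm:d=4} through a maximal $Y$, which would then have to be a non-scalar element of $\C(A)\cap\C(X)=\FF\Id$, impossible.

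For case (i), normalize $A$ (by replacing it with $\beta^{-1}(A-\alpha\Id)$) so that either $A=P=\Id_r\oplus 0_{n-r}$, in which case $\C(P)=M_r(\FF)\oplus M_{n-r}(\FF)$, or $A=N=\sum_{i=1}^r E_{i,r+i}$ is the standard rank-$r$ square-zero matrix; in both cases $2\le r\le n-2$. I would take $X=S(J_r\oplus J_{n-r})S^{-1}$ where $S\in M_n(\FF)$ has every minor nonzero, e.g.\ a Cauchy matrix as already used in Theorem~\ref{thm:d=4}. Then $X$ has Jordan form $J_r(0)\oplus J_{n-r}(0)$---two nilpotent cells sharing the eigenvalue $0$---so $X$ is derogatory and hence nonminimal. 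To verify $\C(A)\cap\C(X)=\FF\Id$ one follows the strategy of Theorem~\ref{thm:d=4}: a maximal $Y$ in the intersection is constrained on the one hand by $\C(A)$ (giving a specific block shape---block-diagonal when $A=P$; a more rigid intertwining form when $A=N$) and on the other by $Y\in\C(X)$, equivalently $S^{-1}YS\in\C(J_r\oplus J_{n-r})$, which is a block-upper-triangular Toeplitz subalgebra. Combining these two structural descriptions and exploiting the no-vanishing-minor property of $S$---via a rank argument identical in spirit to the one run in the three cases of Theorem~\ref{thm:d=4}'s proof---forces $Y$ to be scalar.

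For case (ii), put $A$ into Jordan form $J_3\oplus T$, where $T$ is a direct sum of $J_2(0)$ and $J_1(0)$ cells totalling dimension $n-3$. I would choose $X=S(J_3\oplus J_{n-3})S^{-1}$ with the same kind of $S$; this $X$ is again derogatory, being a conjugate of a matrix with two nilpotent Jordan cells. The crucial structural input in this case is $\rk A^2=1$, which forces the one-dimensional image $\Im A^2$ to play a distinguished role: the ``non-scalar parts'' of any maximal $Y\in\C(A)$ have images tied to this line, and the all-minors-nonzero property of $S$ again rules out any such non-scalar $Y$ from $\C(X)$ by the analogous rank argument.

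The principal obstacle is the case-by-case rank calculation, which must be adapted to the block structure of $\C(A)$ in each subcase (idempotent, square-zero, or nilpotent with $\rk A^2=1$). The hypotheses $2\le\rk A\le n-2$ in (i) and $\rk A^2=1$ in (ii) enter exactly at this step: they are what makes $\C(A)$ ``block-rigid'' enough for the no-vanishing-minor condition on $S$ to produce a rank contradiction. These hypotheses exclude precisely the matrices equivalent to a rank-one matrix, which by Lemma~\ref{lem:A'capR'} are within distance $2$ of every derogatory matrix and so could not satisfy the conclusion of the lemma.
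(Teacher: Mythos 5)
Your strategy -- exhibit $X$ and show $\C(A)\cap\C(X)=\FF\Id$ -- is the right strategy, and it is what the paper does too. But the proposal has a genuine gap at the heart of the matter: you do not actually supply the argument that $\C(A)\cap\C(X)=\FF\Id$, and the argument you invoke by reference does not transfer.

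The rank argument in the proof of Theorem~\ref{thm:d=4} proves a different statement: that a maximal $Y\in\C(A)$ and a maximal $Z\in\C(S^{-1}BS)$ \emph{never commute with each other}, under the standing hypothesis that both $A$ and $B$ are \emph{minimal}. That hypothesis is used in an essential way: because $\C(A)=\bigoplus_i\FF[J_{n_i}(\lambda_i)]$ is a commutative algebra of block Toeplitz matrices, its idempotents are exactly the coordinate sums $\sum_{i\in I}\Id_{n_i}$ (hence $\sum E_{\sigma(i)\sigma(i)}$ in the paper's notation) and its square-zero elements are block Toeplitz with at least $n/2$ zero rows. Those two structural facts are exactly what is fed into the $(r+1)\times(r+1)$ minor computation against the all-nonzero-minors matrix $S$. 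In your Lemma the matrix $A$ is \emph{maximal}, not minimal, so $\C(A)$ is the large algebra $M_r(\FF)\oplus M_{n-r}(\FF)$ (idempotent case) or the analogously large intertwining algebra (square-zero case, case (ii)). A maximal $Y\in\C(A)$ is then $Q_1\oplus Q_2$ with $Q_1,Q_2$ \emph{arbitrary} idempotents (or square-zeros), whose images are arbitrary subspaces of $\FF^r$ and $\FF^{n-r}$ rather than spans of standard basis vectors, and which need not have any zero rows at all. The ``many zero rows'' and ``image spanned by standard basis vectors'' premises of the rank argument fail, so the computation cannot be run ``identically in spirit,'' and nothing in the proposal replaces it. (A secondary point: your $X$ is nilpotent with two Jordan blocks of the same eigenvalue, so $\C(X)$ is comparatively large -- for $n=4$, $r=2$, both $\C(A)$ and $\C(X)$ are $8$-dimensional in a $16$-dimensional ambient space -- which makes the claim that the intersection is only $\FF\Id$ genuinely delicate, not an obvious transversality.)

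The paper does the opposite of what you attempt: rather than reuse the Theorem~\ref{thm:d=4} machinery, it chooses an explicit $X$ with a \emph{small} centralizer (an $X$ with many distinct eigenvalues: $J_2\oplus 0_1\oplus D$ with $D$ diagonal with distinct nonzero entries, or $J_k\oplus 0_1\oplus\Id_{n-k-1}$, or $1\oplus 0\oplus J_{n-2}$) after first conjugating $A$ into a bespoke normal form (the matrix built from $N_{2k-2}$, $z_{2k-2}$, $s_{n-2k+1}$ in the square-zero case, and the $\begin{bmatrix}\Id_k & W\\ 0 & 0\end{bmatrix}$ form in the idempotent case), and then verifies $\C(A)\cap\C(X)=\FF\Id$ by a direct block-by-block computation. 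Choosing $X$ with a small centralizer is what makes the verification tractable. If you want to salvage your plan, you would need either to prove from scratch that $\C(A)\cap\C\bigl(S(J_r\oplus J_{n-r})S^{-1}\bigr)=\FF\Id$ for a Cauchy $S$ -- a new argument not present in the paper -- or to switch to an $X$ of the paper's type.
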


\begin{proof}
(i)  As already observed in Preliminaries, a  maximal matrix $A$  is either a square-zero
matrix or an idempotent, up to equivalence. Let $k=\rk A$.

If $A$ is square-zero, then $2 \leq k \leq \frac{n}{2}$. We define
$s_{\ell}=(1,1,\dots,1)^{\tr}\in \FF^\ell$ and
 $z_{2\ell}= (0,1,0,1,\dots,0,1)^{\tr}\in \FF^{2\ell}$.
   Also, let $N_{2\ell}=\bigoplus_{i=1}^\ell J_2^{\tr} \in M_{2\ell}(\FF)$. Note that $N_{2\ell}^2=0$ and
  $\rk N_{2\ell} =\ell$.
It is easy to see that a matrix
\begin{equation} \label{eq:matrixA}
\begin{bmatrix}
       N_{2k-2} & 0 & z_{2k-2} \\
       0 & 0_{n-2k+1, n-2k+1} & s_{n-2k+1}\\
   0 &   0  & 0_{1, 1}
  \end{bmatrix}
\end{equation}
is a square-zero of  rank $k$, hence conjugate to $A$. So, we can assume without loss of generality
that $A$ is already in the form \eqref{eq:matrixA}.

  Next, let us define a matrix $X=J_2 \oplus 0_1 \oplus D$, where $D$ is a diagonal matrix
  with $n-3$ distinct nonzero diagonal entries. Clearly, $X$ is nonminimal.
   We will prove that $d(A,X)\geq 3$, i.e.,  any matrix that commutes with $A$ and $X$ is a scalar
   matrix.

  First, let us assume  $k=2$. It is easy to see that every matrix  $B\in\C(X)$ can be decomposed in the following way
  \begin{equation}\label{eq:commX}
   B= \left[ \begin{matrix}
    T & S_1 & 0_{2, 1} \\
    S_2 & D' & 0_{n-3, 1} \\
    0_{1, 2} & 0_{1, n-3} & \lambda
   \end{matrix}
   \right]\,
  \end{equation}
  where  $T= \left[ \begin{matrix}
   a & b  \\
   0 & a
  \end{matrix}
  \right] \in M_2(\FF)$, $D'=\mathrm{diag}(d_3,d_4,\ldots,d_{n-1})\in M_{n-3}(\FF)$, $S_1\in M_{2, n-3}(\FF)$ has the only
  nonzero entry in its upper left corner, $S_2 \in M_{n-3, 2}(\FF)$ has the only
  nonzero entry in its upper right corner, and $\lambda\in\FF$. Note that the blocks in the decomposition of $B$ correspond to the blocks in the decomposition of $A$.
  Suppose $B \in \C(X)$ also commutes with  $A= \left[ \begin{matrix}
       N_2 & 0 & z_2 \\
       0 & 0_{n-3, n-3} & s_{n-3}\\
       0 & 0& 0_{1, 1}
  \end{matrix}
  \right]$. Then, $N_2S_1=0$ and $S_2N_2=0$ imply that $S_1=0$ and $S_2=0$.
  Moreover, from $D's_{n-3}=\lambda s_{n-3}$ and $Tz_2=\lambda z_2$  we easily see that $D'=
  \lambda\Id_{n-3}$ and
  $T=\lambda\Id_2$. Thus, $B=\lambda\Id$, so $d(A,X)\geq 3$.

  Now, let us consider the case $k\geq 3$. Again we decompose every $B\in \C(X)$
  to the blocks that correspond to the block decomposition of $A$:
  \begin{equation*}
   B= \left[ \begin{matrix}
    B_1 & 0_{2k-2, n-2k+1} & 0_{2k-2, 1} \\
    0_{n-2k+1, 2k-2} & D' & 0_{n-2k+1, 1} \\
    0_{1, 2k-2} & 0_{1, n-2k+1} & \lambda
   \end{matrix}
   \right]\,
  \end{equation*}
  where  $B_1= \left[ \begin{matrix}
   a & b & c_1   \\
   0 & a & 0\\
   0 & c_2 & d_3
  \end{matrix}
  \right] \oplus \mathrm{diag}(d_4,d_5,\ldots,d_{2k-2}) \in M_{2k-2}(\FF)$ and
   $D' \in M_{n-2k+1}(\FF)$ is a diagonal
  matrix. Note that $B$ is the same as in~\eqref{eq:commX} but decomposed in a different way.
  Suppose $B$ also commutes with  $A$ as defined in \eqref{eq:matrixA}.
Similarly as before, we have $D's_{n-2k+1}=\lambda s_{n-2k+1}$ and thus
$D'=\lambda\Id_{n-2k+1}$.
  Moreover, it is straightforward that  from $B_1z_{2k-2}=\lambda z_{2k-2}$ and $N_{2k-2}B_1=B_1N_{2k-2}$
  we obtain $B_1=\lambda\Id_{2k-2}$. This completes the proof that  $d(A,X)\geq 3$.

 If $A$ is an idempotent, then  $\rk(\Id-A)=n-\rk A$. Since $A$ and
    $\Id-A$ are equivalent, we can thus assume
 without loss of generality that $A$ is of rank $k$ with  $ \frac{n}{2} \leq k \leq n-2$. Let
 $W$ be a $k \times (n-k)$ matrix with the only nonzero elements being
  $$W_{1,n-k}=W_{2,n-k-1}=W_{3,n-k-2}=\ldots=W_{n-k,1}=W_{k,1}=W_{k,n-k}=1\, .$$
  Note that, if $k=\frac{n}{2}$,  the rows $k$ and $n-k$ coincide. Using an appropriate conjugation we can additionally assume that
$$  A= \begin{bmatrix}
   \Id_k & W\\
   0_{n-k, k} & 0_{n-k, n-k}
  \end{bmatrix}.$$
Let us define a nonminimal matrix $X=J_k \oplus 0_1 \oplus \Id_{n-k-1}$. We
 will prove that $d(A,X)\geq 3$, i.e.,   any matrix $B \in \C(A) \cap \C(X)$ is a
 scalar matrix. It is a straightforward calculation that
 $$\C(A)=\left\{
  \left[ \begin{matrix}
   M & MW-WN\\
   0_{n-k,k} & N
  \end{matrix}
  \right]; \;  M \in M_{k}(\FF), N \in M_{n-k}(\FF)
 \right\}$$
 and that $\C(X)$ consists of all matrices of the form
  $$ \left[ \begin{matrix}
   U & s & 0_{k, n-k-1}\\
   v^{\tr} & \lambda & 0_{1, n-k-1}\\
   0_{n-k-1, k}&   0_{n-k-1, 1}  & Y
  \end{matrix}
  \right],$$
  where  $U$ is an upper triangular Toeplitz $k \times k$ matrix,
  $s=  (s_1 , 0 , \ldots ,0  )^{\tr} \in\FF^k$,
  $v=  ( 0 , \ldots ,0 , v_k )^{\tr}\in\FF^k$, $Y = \big[y_{ij}\big]_{2\le i,j\le n-k}\in M_{n-k-1}(\FF)$, and $\lambda\in\FF$.

Suppose $B=\left[ \begin{matrix}
   M & MW-WN\\
   0 & N
  \end{matrix}
  \right] \in \C(A) \cap \C(X)$. It follows that $M= \sum_{i=1}^k m_i J_k^{i-1}$,
  $N=\lambda \oplus Y$ and $(MW-WN)_{ij}=0$ except possibly for $i=j=1$.

 By equations
 \begin{align*}
 0&=(MW-WN)_{k,1}=m_1-\lambda, \\
 0&=(MW-WN)_{k,n-k}=m_1-y_{n-k,n-k},\\
 0&=(MW-WN)_{i,n-k}=m_{k-i+1}\qquad \hbox{ for all $i$ with }(k-1)\ge i\ge (n-k)
 \end{align*}
 it follows that
 $m_1=y_{n-k,n-k}=\lambda $ and $m_2=m_3=\ldots=m_{2k-n+1}=0$. Moreover, by
 $0=(MW-WN)_{i,1}=m_{k-i+1}$ for $i=2,3,\ldots,n-k-1$, it follows that
 $m_{2k-n+2}=\ldots=m_{k-1}=0$. Now, equation $0=(MW-WN)_{1,n-k}=m_{k}$ completes the
 proof that $M=\lambda  \Id_k$.

 We proceed by
  $0=(MW-WN)_{i,n-k-i+1}=m_1-y_{n-k-i+1,n-k-i+1}$ for  $i=2,\ldots,n-k-1$ and
  $0=(MW-WN)_{i,j}=-y_{n-k-i+1,j}$ for  $i=1,2,\ldots,n-k-1$ and $j=2,3,\ldots,n-k$,
 such that $i+j \ne n-k+1$. It follows that $N=\lambda\Id_{n-k}$ and $(MW-WN)=0$.
 Thus, $B=\lambda \Id$ and $d(A,X) \geq 3$.

(ii) Let $A$ be a nilpotent matrix such that $A^3=0$ and $\rk(A^2)=1$. We may assume $A$
is already in its Jordan canonical form, i.e.,
$$A=J_3\oplus \bigoplus_{i=1}^kJ_2\oplus 0_{n-3-2k}.$$
The centralizer of $A$ is contained in the set of matrices of the form
 $B=\left[ \begin{matrix}
   T & S_1 \\
   S_2 & V
  \end{matrix}
  \right] $, where $T=t_0 \Id_3+t_1J_3+t_2J_3^2 \in M_3(\FF)$,
  $V  \in M_{n-3}(\FF)$, and where the first column of
  $S_2  \in M_{n-3, 3}(\FF)$  as well as the last row of
  $S_1  \in M_{3, n-3}(\FF)$ contain only zero entries.

 Now, let us define the nonminimal matrix $X=1 \oplus 0 \oplus J_{n-2}$ and take any
 $B \in \C(A) \cap \C(X)$. Since $B\in \C(X)$, its  off-diagonal entries on the first row
 and the first column
 are all zero. Comparing with the above form for $B$ we deduce that $T=t_0 \Id$. Moreover,
 $B\in \C(X)$ also implies that the bottom-right $(n-2) \times (n-2)$ block of $B$ is
 upper triangular Toeplitz matrix, which is moreover equal to $t_0' \Id_{n-2}$ for some $t_0'\in\FF$ by the fact that the third row of
 $S_1$ vanishes. Actually, $t_0=t_0'$ because a $3\times 3$ block $T$ overlaps with
 $(n-2)\times (n-2)$ bottom right block. Further, $B\in\C(X)$ implies that the only
 possible off-diagonal nonzero entries in the second row and column lie at positions $(2,n)$, and
 $(3,2)$. Actually, $B_{32}=T_{32}=0$, while from $B\in \C(A)$ we deduce that if
 $B_{2n}\ne0$ then also $B_{1(n-1)}\ne0$, which would contradict the fact that the first row
 of $B$ has zero off-diagonal entries. Hence, $B_{2n}=B_{32}=0$ and so $B= t_0\Id$ is a
 scalar and therefore $d(A,X) \geq 3$.
\end{proof}

\begin{theorem}\label{thm:classification-of-rk1}
The following statements are equivalent for a non-scalar matrix $R$.
\begin{itemize}
\item [(i)] $R$ is equivalent to a matrix of rank one.
\item[(ii)] $d(R,X)\le 2$ for every nonminimal matrix $X$.
\end{itemize}
\end{theorem}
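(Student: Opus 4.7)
I would prove each direction separately.

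For the implication (i) $\Rightarrow$ (ii), I would invoke Lemma~\ref{lem:A'capR'}. Suppose $R$ is equivalent to a rank-one matrix $R'$, so $\C(R) = \C(R')$. For an arbitrary nonminimal matrix $X$, the lemma yields a non-scalar $Z \in \C(X) \cap \C(R') = \C(X) \cap \C(R)$, giving the path $R \path Z \path X$ of length at most two. The degenerate cases $Z = R$, $Z = X$, or $R = X$ only strengthen this bound.

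For the converse (ii) $\Rightarrow$ (i), I would argue by contrapositive: assuming $R$ is a non-scalar matrix not equivalent to any rank-one matrix, exhibit a nonminimal $X$ with $\C(R) \cap \C(X) = \FF \Id$, so that $d(R, X) \ge 3$. The double centralizer identity $\C(\C(R)) = \FF[R]$ stated in the Introduction implies that every matrix $M$ with $\C(R) \subseteq \C(M)$ lies in $\FF[R]$, so the search for the witness $X$ is reduced to inspecting maximal elements of $\FF[R]$.

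Case~1: $\FF[R]$ contains a maximal matrix $\alpha \Id + \beta Q$ where $Q$ is either a non-scalar idempotent with $2 \le \rk Q \le n-2$ or a nonzero square-zero matrix with $\rk Q \ge 2$. Lemma~\ref{lem:nonminimal}(i), applied to this $Q$, produces a nonminimal $X$ satisfying $\C(\alpha \Id + \beta Q) \cap \C(X) = \FF \Id$; the inclusion $\C(R) \subseteq \C(\alpha \Id + \beta Q)$ propagates this equality to $R$.

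Case~2: Every non-scalar idempotent in $\FF[R]$ has rank $1$ or $n-1$ and every nonzero square-zero in $\FF[R]$ has rank $1$. A routine spectral analysis then forces $R$ into one of two restricted forms: either (a) $R$ has a single eigenvalue $\lambda$ with $(R - \lambda \Id)^3 = 0$ and $\rk(R - \lambda \Id)^2 = 1$, so that Lemma~\ref{lem:nonminimal}(ii) applies directly to $A = R - \lambda \Id$; or (b) $R$ has two eigenvalues $\lambda_1, \lambda_2$ of algebraic multiplicities $n-1$ and $1$, and the restriction of $R - \lambda_1 \Id$ to its $(n-1)$-dimensional generalized eigenspace satisfies an analogous smallness condition. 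In sub-case (b) one constructs $X$ directly, choosing a nilpotent matrix of the type considered in Lemma~\ref{lem:nonminimal}(ii)  (i.e., with $X^3 = 0$ and $\rk X^2 = 1$) whose nonzero off-diagonal entries couple the $\lambda_1$-generalized eigenspace of $R$ with the $\lambda_2$-coordinate. The verification that $\C(R) \cap \C(X) = \FF \Id$ is then a block-matrix calculation patterned on the proof of Lemma~\ref{lem:nonminimal}(ii), adjusted for the extra eigenvalue.

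The main obstacle is this last sub-case of Case~2, where neither lemma applies off the shelf and an explicit construction of $X$ is required; the entries of $X$ must be chosen simultaneously to handle both eigenspaces of $R$ while respecting the tight structural restrictions imposed by the hypothesis that $R$ is not equivalent to any rank-one matrix.
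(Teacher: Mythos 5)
Your overall strategy coincides with the paper's: (i)$\Rightarrow$(ii) by Lemma~\ref{lem:A'capR'}, and the contrapositive of (ii)$\Rightarrow$(i) by locating a maximal matrix $M\succ R$ via the double centralizer theorem and feeding it to Lemma~\ref{lem:nonminimal}. Your Case~1 and Case~2(a) match the paper exactly. The problem is Case~2(b), which you correctly identify as the sticking point but leave unfinished, and the witness you sketch there is the wrong shape.

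In Case~2(b) ($R$ has eigenvalues $\lambda_1,\lambda_2$ of multiplicities $n-1$ and $1$) the paper splits into two genuinely different situations. If the nilpotent part of $R$ on the $(n-1)$-dimensional generalized eigenspace contains a Jordan cell of size $3$, the paper uses a dominance trick rather than a new construction: replacing the isolated eigenvalue $\lambda_2$ by $\lambda_1$ gives a matrix $R'$ with $R\prec R'$ (the spectral decomposition of $R$ forces any matrix in $\C(R)$ to be block-diagonal, hence also in $\C(R')$), and $R'$ is now nilpotent with $(R')^3=0$, $\rk(R')^2=1$, so Lemma~\ref{lem:nonminimal}(ii) applies to $R'$ and $\C(R)\subseteq\C(R')$ transfers the conclusion. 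This trick fails in the remaining situation $R\sim 1\oplus J_2\oplus 0_{n-3}$, because the corresponding $R'$ is rank one. There the paper exhibits the explicit witness $X=J_2\oplus J_{n-2}$, which for $n\ge 5$ has $X^3\ne 0$ and so is \emph{not} a nilpotent of the type you propose. Moreover, not every conjugate of $J_3\oplus 0_{n-3}$ works even when $n$ is small: for $n=4$ and $R=1\oplus J_2\oplus 0$ one checks that $X=E_{12}+E_{23}$ leaves a two-parameter family in $\C(R)\cap\C(X)$, while $X=E_{12}+E_{24}$ does give only scalars. So "an explicit construction is required" is accurate, but your proposed form must either be replaced (as the paper does) or accompanied by a verification that you have not supplied. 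Finally, Lemma~\ref{lem:nonminimal} requires $n\ge 4$, so the $n=3$ case needs to be dispatched separately (the paper does this by noting every nonminimal $3\times 3$ matrix is equivalent to rank one); your proposal omits this.
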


\begin{proof}
 If $n=3$, then every nonminimal matrix is equivalent to a rank-one matrix,
 so we may assume that $n\ge4$.

 To prove that $(i)\Longrightarrow (ii)$, we can assume without loss of generality that $\rk R=1$.
 Let  $X$ be an arbitrary nonminimal matrix. Then $d(R,X)\le 2$ by
 Lemma~\ref{lem:A'capR'}.

 $\neg(i)\Longrightarrow \neg(ii)$. Suppose $R$ is not equivalent to a rank-one matrix.
 Note that there exists at least one maximal matrix $M\succ R$. In fact,  $M=p(R)$
 for some polynomial $p$. Moreover, we can assume that every maximal $M\succ R$ is either a nonzero square-zero matrix
 or a non-scalar idempotent. Hence $1 \le \rk M \le n-1$. Note that $\rk M = n-1$ implies $M$ is an idempotent and therefore it
 is equivalent to a maximal matrix of rank one. So we can assume that $1 \le \rk M \le n-2$.

 If for a maximal $M\succ R$ we have $2\le \rk M \le n-2$, then by Lemma
 \ref{lem:nonminimal}   there exists a nonminimal matrix $X$ with $d(M,X)\ge 3$. Hence also
  $d(R,X)\ge 3$ because $\C(R)\subseteq\C(M)$.

 Otherwise, every maximal matrix $M\succ R$ is equivalent to a rank-one matrix. This
 implies that (i) $R$ is  either equivalent to a nilpotent matrix with exactly one Jordan block of
 dimension $3$ and all other
 cells of dimension at most $2$, or (ii) $R$ is equivalent to a matrix whose Jordan structure is equal to
  $1 \oplus J_2 \oplus 0_{n-3}$, or (iii) $R$ is equivalent to a matrix whose Jordan structure is equal to $1 \oplus J_3\oplus\bigoplus_{i=1}^k J_2 \oplus 0_{n-3-2k}$.
 In the first case, Lemma~\ref{lem:nonminimal} assures that there exists
  a nonminimal $X$ with $d(R,X)\ge 3$. In the case (ii) we have, modulo conjugation, $R=1 \oplus J_2 \oplus 0_{n-3}$. It is easy to see that
  $X=J_2\oplus J_{n-2}$ is nonminimal and $d(R,X)\ge 3$. In case (iii) we have, modulo conjugation,
  $R\prec R'=0 \oplus J_3\oplus\bigoplus_{i=1}^k J_2 \oplus 0_{n-3-2k}$. Again, Lemma~\ref{lem:nonminimal}
  gives a nonminimal matrix~$X$ with $d(R',X) \geq 3$, so also $d(R,X)\ge 3$.
\end{proof}

In the previous theorem rank-one matrices are classified with the help of matrices which
are not minimal. We next classify minimal matrices as the ones which maximize the
distance in a commuting graph.
\begin{theorem}
The following are equivalent for a matrix $A\in M_n(\FF)$.
\begin{itemize}
\item [(i)] $A$ is minimal.
\item[(ii)] There exists a matrix $X$ such that $d(A,X)=4$.
\end{itemize}
\end{theorem}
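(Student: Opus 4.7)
The plan is to prove the two implications separately; $(i)\Rightarrow(ii)$ is an immediate consequence of Theorem~\ref{thm:d=4}, while $(ii)\Rightarrow(i)$ is proved by contrapositive using Lemmas~\ref{lem:rk-1'=all} and~\ref{lem:A'capR'}.

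For $(i)\Rightarrow(ii)$, assume $A$ is minimal. Choose any invertible matrix $S\in M_n(\FF)$ with all minors nonzero (for instance a Cauchy matrix, as noted before Theorem~\ref{thm:d=4}), and set $B:=A$. Then both $A$ and $B$ are minimal, and Theorem~\ref{thm:d=4} applied with this choice yields $X:=S^{-1}AS$ satisfying $d(A,X)=4$. Hence an $X$ with $d(A,X)=4$ exists.

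For $(ii)\Rightarrow(i)$, I argue the contrapositive: if $A$ is not minimal, then $d(A,X)\le 3$ for every non-scalar matrix $X\in M_n(\FF)$. Fix such an $X$. By Lemma~\ref{lem:rk-1'=all} there exists a rank-one matrix $R\in M_n(\FF)$ with $d(X,R)\le 1$ (note $R$ is automatically non-scalar, hence a vertex of the commuting graph). Since $A$ is not minimal, Lemma~\ref{lem:A'capR'} gives $d(A,R)\le 2$. The triangle inequality in the commuting graph then yields
\[
d(A,X)\le d(A,R)+d(R,X)\le 2+1=3,
\]
so no matrix $X$ can realize distance $4$ from $A$. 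This contradicts $(ii)$ and completes the proof.

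I do not anticipate a serious obstacle here: both directions reduce to invoking results already proved in the paper (Theorem~\ref{thm:d=4} and Lemmas~\ref{lem:rk-1'=all}, \ref{lem:A'capR'}), and the only mild care required is to note that the rank-one matrix produced by Lemma~\ref{lem:rk-1'=all} is a genuine vertex of $\Gamma(M_n(\FF))$, so that the triangle-inequality computation is legitimate.
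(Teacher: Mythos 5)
Your proof is correct and follows essentially the same approach as the paper: for $(i)\Rightarrow(ii)$ invoke Theorem~\ref{thm:d=4}, and for $\neg(i)\Rightarrow\neg(ii)$ use Lemma~\ref{lem:rk-1'=all} to produce a rank-one $R$ near $X$, bound $d(A,R)\le 2$ because $A$ is nonminimal, and finish with the triangle inequality. The only cosmetic difference is that you cite Lemma~\ref{lem:A'capR'} directly for the bound $d(A,R)\le 2$, whereas the paper routes through Theorem~\ref{thm:classification-of-rk1}; since that theorem's implication $(i)\Rightarrow(ii)$ is itself just Lemma~\ref{lem:A'capR'}, the two are equivalent.
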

\begin{proof}
$(i)\Longrightarrow(ii)$. This follows from Theorem~\ref{thm:d=4}.

$\neg(i)\Longrightarrow\neg(ii)$.
Let~$A$ be a non-minimal matrix, and let $X$ be any matrix. By Lemma~\ref{lem:rk-1'=all},
there exists a rank-one matrix
$R$ with $d(X,R)\le 1$.
By Theorem~\ref{thm:classification-of-rk1} we have $d(A,R)\le2$, so triangle inequality gives \mbox{$d(A,X)\le 3$}.
\end{proof}

\begin{remark}
Combining the previous two theorems yields that $R$ is equivalent to rank-one matrix
if and only if $d(R,X)\le 2$ for every  matrix $X$ such that $d(X,Z)\le 3$,
for all $Z\in M_n(\FF)$.
\end{remark}

Semisimple matrices can also be classified using the distance in the commuting graph. Before doing that we need two lemmas.
\begin{lemma}\label{lem:diagonalizable1}
Suppose a minimal matrix $B\in M_n(\FF)$ is semisimple. Then
for
any $Y\path X \path B$ there exists a minimal matrix $M$ with $Y\path  M\path  X$.
\end{lemma}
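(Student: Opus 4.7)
The plan is to exploit the very rigid structure of $\C(B)$ forced by $B$ being both minimal and semisimple: it forces $\C(B)$ to equal (in a suitable basis) the full diagonal algebra, hence $X\in\C(B)$ is diagonal and $Y\in\C(X)$ is block-diagonal; the required $M$ is then built block by block.

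First I would observe that $B$ being minimal and semisimple forces its Jordan form to be $\diag(\lambda_1,\dots,\lambda_n)$ with pairwise distinct $\lambda_i$. Since distances in $\Gamma(M_n(\FF))$ are preserved by simultaneous conjugation, I may assume $B$ is already in this form, so $\C(B)=\FF[B]$ is the algebra of all diagonal matrices and $X$ is diagonal. After permuting the basis (still keeping $B$ diagonal and $X$ diagonal) I may write
$$X=\mu_1\Id_{n_1}\oplus\cdots\oplus\mu_k\Id_{n_k},\qquad \mu_i\ne\mu_j\ \text{for}\ i\ne j.$$
Then $\C(X)$ is precisely the algebra of block-diagonal matrices with block sizes $n_1,\dots,n_k$, and $Y$ decomposes as $Y=Y_1\oplus\cdots\oplus Y_k$.

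Next I would construct $M$ block by block. On the $l$-th block $X$ is a scalar, so any block-internal basis change leaves $X$ invariant. Putting $Y_l$ into Jordan form $\bigoplus_j J_{m_{j,l}}(\nu_{j,l})$, I define
$$M_l=\bigoplus_j\bigl(\alpha_{j,l}\Id_{m_{j,l}}+J_{m_{j,l}}\bigr)$$
and choose the scalars $\alpha_{j,l}$ pairwise distinct over all indices $(j,l)$, and also distinct from every eigenvalue of $X$ and of $Y$. This is possible because $\FF$ is algebraically closed and hence infinite. Each summand of $M_l$ is a single Jordan cell, so $M:=M_1\oplus\cdots\oplus M_k$ has $n$ pairwise distinct eigenvalues and is therefore minimal. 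Its block-diagonal form guarantees $MX=XM$; inside each Jordan cell of $Y_l$ the corresponding block of $M_l$ is a polynomial in the nilpotent part, so $MY=YM$ as well. The distinctness condition on the $\alpha_{j,l}$ guarantees $M\ne X$ and $M\ne Y$, so $Y\path M\path X$ is a genuine path in the commuting graph.

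No step is really hard; the only thing to keep track of is the compatibility of the successive basis changes (diagonalization of $B$, then permutation grouping equal entries of $X$, then Jordanization of each $Y_l$ internally to its block). This is automatic because the permutation preserves diagonality of $X$ and $B$, and the subsequent block-internal changes occur on blocks where $X$ is already a scalar multiple of the identity, so $X$ is left untouched.
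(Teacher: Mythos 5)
Your proof is correct and follows essentially the same route as the paper's: diagonalize $B$, group equal eigenvalues of $X$ by a permutation, Jordanize each diagonal block of $Y$ by a block-diagonal change of basis that fixes $X$, and build $M$ by replacing the eigenvalues on $Y$'s Jordan cells with pairwise distinct scalars. One small slip: $M$ does not in general have $n$ pairwise distinct eigenvalues (the eigenvalue $\alpha_{j,l}$ has algebraic multiplicity $m_{j,l}$, which may exceed one); what you actually get is one Jordan cell per eigenvalue, i.e.\ $M$ is nonderogatory, which is exactly the characterization of minimality used in the paper, so the conclusion stands.
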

\begin{proof}
Assume with no loss of generality that $B$ is diagonal. Then,
every $X\in \C(B)$ is also diagonal. Using simultaneous
conjugation on $(B,X)$ we may further assume that
$X=\lambda_1\Id_{n_1}\oplus\dots\oplus\lambda_k \Id_{n_k}$, with
$\lambda_1,\dots,\lambda_k$ pairwise distinct and
$n_1,\dots,n_k\ge1$. Now, since $Y$ commutes with $X$ we have that
$Y\in \C(X)=M_{n_1}(\FF)\oplus\dots\oplus M_{n_k}(\FF)$.
Consequently, $Y=Y_1\oplus \dots\oplus Y_k$ is block-diagonal and
we may find an invertible block-diagonal matrix
$S=S_1\oplus\dots\oplus S_k$ such that $S^{-1}XS=X$ and
$S^{-1}YS=\bigoplus_{i=1}^k S_i^{-1}Y_iS_i$ is in Jordan
upper-triangular form; say $S^{-1}YS=\bigoplus_{i=1}^s
J_{m_i}(\mu_i)$, with $m_i\ge 1$, $s \ge k$. Then we can choose
distinct $\nu_1,\dots,\nu_s\in\FF$, such that the matrix
$M=S\bigoplus_{i=1}^s J_{m_i}(\nu_i)S^{-1}$ is neither equal to $X$ nor $Y$.
Also, since $\nu_1,\dots,\nu_s$ are distinct, $M$ is nonderogatory,
hence minimal, and it commutes with $X$ and with $Y$.
\end{proof}

\begin{lemma}\label{lem:diagonalizable2}
 Suppose a minimal $B\in M_n(\FF)$ is not semisimple. Then there exist
matrices $X,Y$ with $Y\path X\path  B$, but such that no minimal matrix commutes with
both $X$ and $Y$.
\end{lemma}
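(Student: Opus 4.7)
The plan is to exhibit an explicit two-step path $Y \path X \path B$ where $X$ and $Y$ are carefully chosen rank-one matrix units sharing their first row index, so that the joint centralizer $\C(X) \cap \C(Y)$ is forced to share a repeated left-eigenvector and therefore contains no nonderogatory matrix. Since $B$ is minimal and non-semisimple, up to conjugation $B = \bigoplus_{i=1}^k J_{n_i}(\lambda_i)$ with the $\lambda_i$ pairwise distinct, and after reordering blocks I may assume $n_1 \geq 2$. Because $n \geq 3$ and $n_1 \geq 2$, the set $\{2,\dots,n\} \setminus \{n_1\}$ is nonempty, so I can pick some $r$ in it and set
\[
X := E_{1,n_1}, \qquad Y := E_{1,r} \qquad \text{in } M_n(\FF).
\]

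Next I would verify that $Y \path X \path B$ is a legitimate path. Both $X$ and $Y$ are rank-one matrix units (hence non-scalar) and $X \neq Y$ since $n_1 \neq r$. The identity $E_{1,a} E_{1,b} = \delta_{a,1} E_{1,b}$ gives $XY = YX = 0$ (as $n_1, r \geq 2$), and a short block computation shows $BX = XB = \lambda_1 X$: column $1$ of $B$ is $\lambda_1 e_1$ since $e_1$ heads the first Jordan block, while row $n_1$ of $B$ is $\lambda_1 e_{n_1}^{\tr}$ since $e_{n_1}$ tails that block. Also $X \neq B$, for equality would force every eigenvalue of $B$ to vanish (so $k = 1$ by distinctness) and every superdiagonal $1$ of $B$ to disappear (so $n_1 = 1$), contradicting $n_1 \geq 2$.

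The crux is the claim that no minimal matrix lies in $\C(X) \cap \C(Y)$. I would use the elementary centralizer formula: $A$ commutes with $E_{ij}$ if and only if $A_{k,i} = 0$ for all $k \neq i$, $A_{j,l} = 0$ for all $l \neq j$, and $A_{ii} = A_{jj}$. Applying this simultaneously to $X$ and $Y$, any $A \in \C(X) \cap \C(Y)$ must satisfy
\[
A_{1,1} = A_{n_1,n_1} = A_{r,r} =: a,
\]
with row $n_1$ of $A$ equal to $a\, e_{n_1}^{\tr}$ and row $r$ equal to $a\, e_r^{\tr}$. Since $n_1 \neq r$, the vectors $e_{n_1}^{\tr}$ and $e_r^{\tr}$ are two linearly independent left-eigenvectors of $A$ sharing eigenvalue $a$. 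Because left- and right-geometric multiplicities of an eigenvalue always coincide (both equal $n - \rk(A - a\Id)$), $a$ has geometric multiplicity at least $2$, so $A$ is derogatory (or scalar), and in either case not minimal.

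The main obstacle is conceptual rather than computational: one must find a \emph{uniform} choice of $(X,Y)$ that simultaneously handles the extreme cases $n_1 = n$ (where $B$ is a single Jordan block and $r$ can be any index in $\{2,\dots,n-1\}$) and $n_1 = 2$ (where necessarily $r \geq 3$, so $Y$ is forced to reach outside the first Jordan block of $B$). Once the twin matrix units $E_{1,n_1}$ and $E_{1,r}$ are set up, everything else reduces to a transparent eigenvector count.
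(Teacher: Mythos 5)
Your proof is correct and uses the same construction as the paper: $X = E_{1,n_1}$ and $Y = E_{1,r}$ with $r \in \{2,\dots,n\}\setminus\{n_1\}$. Only the final step differs. The paper shows no minimal $A$ commutes with both $X$ and $Y$ by passing to a Jordan basis for $A$ and observing that any rank-one element of $\C(A) = \FF[A]$ must be a scalar multiple of $J_{m_j}^{m_j-1}$ inside a single Jordan block $j$ of $A$; since $\rk(X+Y)=1$ forces $X$ and $Y$ into the same block, they would be linearly dependent, contradicting $n_1 \neq r$. You instead compute $\C(E_{1,n_1}) \cap \C(E_{1,r})$ directly from the elementary centralizer formula for a matrix unit: any $A$ in this intersection has row $n_1$ equal to $a\,e_{n_1}^{\tr}$ and row $r$ equal to $a\,e_r^{\tr}$ with $a = A_{1,1}$, so $a$ is an eigenvalue of geometric multiplicity at least $2$ and $A$ is derogatory (or scalar). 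Your finish is somewhat more elementary, avoiding both the conjugation of $A$ to Jordan form and the description of rank-one elements in $\FF[A]$; the paper's argument is shorter but leans on more structural facts about centralizers of nonderogatory matrices. Both are valid, and both handle the boundary cases ($n_1 = n$ and $n_1 = 2$) uniformly by the same choice of $r$.
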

\begin{proof}
With no loss of generality assume $B$ is already in its upper-triangular Jordan form,
$B=J_{n_1}(\lambda_1)\oplus\dots\oplus J_{n_k}(\lambda_k)$ with $\lambda_1, \ldots, \lambda_{n_k}$ distinct
and $n_1\ge2$. Define
$X=E_{1n_1}=J_{n_1}^{n_1-1}\oplus 0_{n-n_1}\in \C(B)$ and for an arbitrary
$k\in\{1,\dots,n\}\setminus\{1,n_1\}$ define $Y=E_{1k}$. Clearly, $X$ commutes with $Y$.
Let us show that no minimal $A$ commutes with both $X$ and $Y$. Assume $A=\bigoplus_{j=1}^s J_{n_j}(\mu_j)$ is already in its Jordan canonical form.
Since $X\in \C(A)$ is of rank one, it follows that $X\in\FF J_{n_{j_1}}^{n_{j_1}-1}$
for some $j_1$, and likewise  $Y\in\FF J_{n_{j_2}}^{n_{j_2}-1}$ for some $j_2$. However, $\rk(X+Y)=1$ and so $j_1=j_2$,
which gives $X$ and $Y$ must be linearly dependent, a contradiction.
\end{proof}

\begin{theorem}
Let $A\in M_n(\FF)$ be a non-scalar matrix. Then the following are
equivalent.
\begin{itemize}
\item [(i)] $A$ is semisimple.
\item[(ii)]  There exists a minimal $B\in\C(A)$ such that for any $Y\path     X\path
    B$ there exists a minimal matrix $M$ with $Y\path M\path X$.
\end{itemize}
\end{theorem}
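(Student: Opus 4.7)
The plan is to reduce the theorem to the two preceding lemmas, bridged by one structural observation about minimal matrices commuting with $A$.

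For the implication $(i)\Rightarrow(ii)$, since $A$ is semisimple I would first conjugate $A$ into diagonal form; this is harmless because both conditions in the theorem are clearly preserved under simultaneous conjugation. I would then choose $B=\diag(b_1,\dots,b_n)$ with pairwise distinct entries $b_i\in\FF$. Such a $B$ is diagonal and therefore commutes with $A$ and is semisimple; having $n$ distinct eigenvalues it is also nonderogatory, hence minimal. The required conclusion follows by feeding this $B$ straight into Lemma~\ref{lem:diagonalizable1}.

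For the contrapositive $\neg(i)\Rightarrow\neg(ii)$, assume $A$ is not semisimple. I would show that \emph{every} minimal $B\in\C(A)$ is itself non-semisimple, and then invoke Lemma~\ref{lem:diagonalizable2} on this $B$ to produce the required pair $(X,Y)$ with $Y\path X\path B$ admitting no minimal intermediate vertex. The crucial observation is that for any minimal $B$ the double-centralizer identity $\C(B)=\FF[B]$ (recorded in the preliminaries) combines with $A\in\C(B)$ to force $A=p(B)$ for some polynomial $p\in\FF[t]$. If $B$ were diagonalizable, then $p(B)$ would also be diagonalizable, rendering $A$ semisimple and contradicting our hypothesis.

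The whole argument is therefore light once one has recognised this ``polynomial rigidity'' $A\in\FF[B]$ for minimal $B\in\C(A)$. The only subtlety I would watch is parsing (ii) correctly as an \emph{existence} statement: the forward direction needs only one well-chosen $B$, whereas the contrapositive must rule out every candidate $B$, which is exactly what the polynomial argument accomplishes.
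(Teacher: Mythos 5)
Your proof is correct and follows essentially the same route as the paper's: both directions reduce to Lemmas~\ref{lem:diagonalizable1} and~\ref{lem:diagonalizable2}, with the contrapositive hinging on the same double-centralizer observation $\C(B)=\FF[B]$ forcing $A\in\FF[B]$, hence $B$ non-semisimple. Your explicit justification that a polynomial in a diagonalizable matrix is diagonalizable fills a step the paper leaves implicit, but the argument is the same.
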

\begin{proof}
$(i) \Longrightarrow (ii)$. Assume without loss of generality that $A$ is already
diagonal. Choose distinct scalars $\mu_1,\dots,\mu_n$ to form a minimal matrix
$B=\diag(\mu_1,\dots,\mu_n)$ which clearly commutes with $A$.  Then, (ii) follows from
Lemma~\ref{lem:diagonalizable1}.

 $\neg(i) \Longrightarrow \neg(ii)$. Choose any minimal $B$
which commutes with non-semisimple $A$ (at least one does exist, for example, if
$A=S\bigoplus_{i=1}^k J_{n_i}(\lambda_i)S^{-1}$, then for distinct scalars
$\lambda_1,\dots,\lambda_k$ matrices $A$ and $B=S\bigoplus_{i=1}^k J_{n_i}(\lambda_i)S^{-1}$ commute).
Since $\C(B)=\FF[B]$ it follows that $A\in \FF[B]$ which implies that $B$ itself is not semisimple. It now follows
from Lemma~\ref{lem:diagonalizable2} that there exist $X,Y$ with $Y\path X\path B$, but no
minimal matrix commutes with both of them. So (ii) does not hold.
\end{proof}

Let us conclude  with an example of a connected commuting graph  over  algebraically non-closed field
with the diameter strictly larger than~$4$.
\begin{example} The commuting graph for $M_9(\ZZ_2)$ is connected with
diameter at least $5$.

Note that $\ZZ_2$ permits only one field extension  of degree $n=9$, and this is the
Galois field $GF(2^9)$ which  contains $GF(2^3)$ as the only proper intermediate
subfield. So, by~\cite[Theorem 6]{Akbari_Bidkhori_Mohammadian} the commuting graph of
$M_9(\ZZ_2)$ is connected. To see that its diameter is at least $5$, consider an
irreducible polynomial
$m(\lambda)=\lambda^9+\lambda^8+\lambda^4+\lambda^2+1\in\ZZ_2[\lambda]$ and let
$\widehat{A}=C(m)\in M_9(\ZZ_2)$ be its companion matrix. Since $\widehat{A}$ has a
cyclic vector, $\C(\widehat{A})=\ZZ_2[\widehat{A}]$ by a well known Frobenius result on
dimension of centralizer~(see for example
\cite[Corollary~1]{Akbari_Bidkhori_Mohammadian}), and this is a field extension of
$\ZZ_2$~\cite[Theorem 4.14, pp. 472]{Joshi}  of index $n=9$. Actually, $\C(\widehat{A})$
is isomorphic to $GF(2^9)$ by the uniqueness of field extensions for finite fields. In
the sequel we will identify the two.

Since the field extension $\ZZ_2\subset GF(2^9)$ contains only $GF(2^3)$ as a proper
intermediate subfield, we see that each  $X\in\C(\widehat{A})\setminus GF(2^3)$ satisfies
$\ZZ_2[X]=\ZZ_2[\widehat{A}]=\C(\widehat{A})$ and in particular $X$ and $\widehat{A}$ are
polynomials in each other so they are equivalent. Moreover, each non-scalar
$\widehat{Y}\in GF(2^3)$ satisfies $\ZZ_2[\widehat{Y}]=GF(2^3)$, because no proper
intermediate subfields exist between $\ZZ_2\subset GF(2^3)$, and in particular,
$\C(\widehat{Y}_1)=\C(\widehat{Y}_2)$ for any two non-scalar
$\widehat{Y}_1,\widehat{Y}_2\in GF(2^3)\subset GF(2^9)=\C(\widehat{A})$.

There exists a polynomial $p$ so that $\widehat{Y}=p(\widehat{A})\in GF(2^3)\setminus\{0,1\}$. As
the field $GF(2^3)$ contains no idempotents other than $0$ and $1$ we see that the
rational canonical form of $\widehat{Y}$ consists only of cells which correspond to powers of the same
irreducible polynomials. Likewise, the field contains no nonzero nilpotents, so each cell
of $\widehat{Y}$ corresponds to the same irreducible polynomial, raised to power $1$.
Moreover, $GF(2^3)$ has no subfields other that $\ZZ_2$, so $\ZZ_2[\widehat{Y}]=GF(2^3)$ and
hence the minimal polynomial of $\widehat{Y}\in GF(2^3)$ has degree $[GF(2^3):\ZZ_2]=3$. This
polynomial is relatively prime to its derivative, so in a splitting field, $\widehat{Y}$ has
three distinct eigenvalues. It easily follows that $\widehat{Y}$ is conjugate to a matrix
$C\oplus C\oplus C$, with $C$ being a $3\times 3$ companion matrix of some irreducible
polynomial of degree $3$. Let $S_1$ be an invertible matrix such that
$\widehat{Y}=S_1^{-1}(C \oplus C \oplus C)S_1$ and define
$$A=S_1\widehat{A}S_1^{-1}.$$ Clearly then $p(A)=S_1\widehat{Y}S_1^{-1}=C\oplus C\oplus C$ and
it  follows that
\begin{equation}\label{eq:exa}
\C(p(A))=\begin{bmatrix}
\ZZ_2[C] &\ZZ_2[C] &\ZZ_2[C]\\
\ZZ_2[C] &\ZZ_2[C] &\ZZ_2[C]\\
\ZZ_2[C] &\ZZ_2[C] &\ZZ_2[C]
\end{bmatrix}.
\end{equation} Since $\ZZ_2[\widehat{Y}]=GF(2^3)$ we obtain $\ZZ_2[C]=GF(2^3)$.

 Consider a $3\times 3$ block
matrix
\begin{equation*}\label{eq:N}
N=\begin{bmatrix}
E_{13} &0 &0\\
0 &0 &E_{13}\\
E_{32} & 0 &0
\end{bmatrix}, \quad E_{13},E_{13},E_{32}\in M_3(\ZZ_2).
\end{equation*}
It is immediate that
$N^3=0$, so $\Id+N$ is invertible. Define
$$B=(\Id+N)A(\Id+N)^{-1}.$$
We will show that $d(A,B) \ge 5$.

 Suppose there exists a path $A \path V \path Z \path W \path B$ of length 4.
 Note that $V\in
GF(2^3)\subset\C(A)$. Otherwise, if  $V\in \C(A)\setminus GF(2^3)$ then $\C(V)=\C(A)$ and
such $V$ has exactly the same neighbours as $A$. Since $B=(\Id+N)A(\Id+N)^{-1}$, it follows
$W=(\Id+N)U(\Id+N)^{-1}$ for some $U\in GF(2^3)\subset\C(A)=(\Id+N)^{-1}\C(B)(\Id+N)$.
 Recall that any two non-scalar elements in $GF(2^3)$ have
the same centralizer. So in particular we might take $U=V=p(A)=C\oplus C\oplus C$ where
polynomial $p$ was defined before. For any $Z\in \C(V)\cap\C((\Id+N)V(\Id+N)^{-1})$ we
have
$$Z=(\Id+N)\widehat{Z}(\Id+N)^{-1},\qquad Z, \widehat{Z}\in \C(V)$$
and hence, by postmultiplying with $(\Id+N)$ and rearranging,
\begin{equation}\label{eq:exa-2ndeq}
Z-\widehat{Z}=N\widehat{Z}-ZN.
\end{equation}
Let us write $Z=\bigl[Z_{ij}  \bigr]_{1\le i,j\le3}$ and $\widehat{Z}=\bigl[\widehat{Z}_{ij}
\bigr]_{1\le i,j\le3}$ as $3\times 3$ block matrices and by \eqref{eq:exa} we have that
 $Z_{ij},\widehat{Z}_{ij}\in\ZZ_2[C]=GF(2^3)\subseteq
M_3(\ZZ_2)$, hence each of them is either zero or invertible. Then
(\ref{eq:exa-2ndeq}) implies
$$\bigl[Z_{ij}-\widehat{Z}_{ij}   \bigr]_{ij}=
\begin{bmatrix}
-Z_{11}E_{13}-Z_{13}E_{32}+E_{13}\widehat{Z}_{11}&E_{13}\widehat{Z}_{12}&E_{13}\widehat{Z}_{13}-Z_{12}E_{13}\\
-Z_{21}E_{13}-Z_{23}E_{32}+E_{13}\widehat{Z}_{31}&E_{13}\widehat{Z}_{32}&E_{13}\widehat{Z}_{33}-Z_{22}E_{13}\\
-Z_{31}E_{13}-Z_{33}E_{32}+E_{32}\widehat{Z}_{11}&E_{32}\widehat{Z}_{12}&E_{32}\widehat{Z}_{13}-Z_{32}E_{13}
\end{bmatrix}.
$$
Observe that each block on the left side belongs to $\ZZ_2[C]=GF(2^3)\subseteq
M_3(\ZZ_2)$, and so is either zero or invertible. On the other
hand, on the right side, each block in the last two columns  has rank at most two. We
deduce that  the last two columns on both sides are zero. In particular, comparing  the
second columns we see that $\widehat{Z}_{12}=Z_{12}=0$ and $\widehat{Z}_{32}=0$, so
$Z_{22}=\widehat{Z}_{22}$, and $Z_{32}=\widehat{Z}_{32}=0$. Putting this in the
above equation and simplifying,  the last column then gives $\widehat{Z}_{13}=0$, so
$Z_{13}=\widehat{Z}_{13}=0$, $Z_{23}=\widehat{Z}_{23}$, and
$Z_{33}=\widehat{Z}_{33}$. Also, comparing the   $(2,3)$ positions gives
$$0=Z_{23}-\widehat{Z}_{23}=E_{13}\widehat{Z}_{33}-\widehat{Z}_{22}E_{13}=e_1({\widehat{Z}_{33}}^{\tr}e_3)^{\tr}-
\widehat{Z}_{22}e_1e_3^{\tr}.$$  Moreover,
$\widehat{Z}_{33}^{\tr}e_3=\lambda e_3$ and $\widehat{Z}_{22}e_1=\lambda e_1$, $\lambda \in \ZZ_2$. Since
$\widehat{Z}_{33},\widehat{Z}_{22}\in\ZZ_2[C]$ and every vector is cyclic for $C$ we see
that $\widehat{Z}_{33}=\widehat{Z}_{22}=\lambda\Id_3$.
The matrix equation therefore simplifies to
$$\begin{bmatrix}
 Z_{11}-\widehat{Z}_{11} & 0 & 0 \\
 Z_{21}-\widehat{Z}_{21} & 0 & 0 \\
 Z_{31}-\widehat{Z}_{31} & 0 & 0
\end{bmatrix}=\begin{bmatrix}
   -Z_{11} E_{13} + E_{13}\widehat{Z}_{11} & 0 & 0 \\
    -Z_{21} E_{13}-\widehat{Z}_{23} E_{32} + E_{13} \widehat{Z}_{31}& 0 & 0   \\
  -Z_{31} E_{13}-\lambda  E_{32} +   E_{32}\widehat{Z}_{11} & 0 & 0
\end{bmatrix}.$$
Comparing the position $(1,1)$ gives by similar arguments as above that
$\widehat{Z}_{11}=Z_{11}=\mu\Id_3$. Inserting this into the equation we see after
rearrangement that the rank of the block at position $(3,1)$ is equal to $\rk((\mu -\lambda )
E_{32}-Z_{31}E_{13})\le 2$, which forces the two blocks at position $(3,1)$ to be
zero, i.e., $Z_{31}-\widehat{Z}_{31}=0=(\mu -\lambda ) E_{32}-Z_{31}E_{13}= (\mu
-\lambda ) e_3e_2^{\tr}-Z_{31}e_1e_3^{\tr}$. We immediately get
$Z_{31}=\widehat{Z}_{31}=0=(\mu -\lambda )$. Therefore,
$Z_{11}=Z_{22}=Z_{33}=\lambda\Id_3$. Finally, comparing the $(2,1)$
positions gives
$$Z_{21}-\widehat{Z}_{21}=-Z_{21}E_{13}-\widehat{Z}_{23}E_{32},$$
and arguing as above, $Z_{21}=\widehat{Z}_{21}=0$. Hence, $Z$ is scalar.
So, $\C(V)\cap \C(W)$ contains only scalars,
which gives that $d(A,B)\ge 5$.

\end{example}

\end{document}